\font\smallit=cmti10
\renewcommand\section{\@startsection {section}{1}{\z@}
{-30pt \@plus -1ex \@minus -.2ex}
{2.3ex \@plus.2ex}
{\normalfont\normalsize\bfseries}}
\renewcommand\subsection{\@startsection{subsection}{2}{\z@}
{-3.25ex\@plus -1ex \@minus -.2ex}
{1.5ex \@plus .2ex}
{\normalfont\normalsize\bfseries}}
\renewcommand{\@seccntformat}[1]{\csname the#1\endcsname. }
\newtheorem{theorem}{Theorem}[subsection]
\newtheorem*{theo}{Theorem}
\newtheorem{lemma}[theorem]{Lemma}
\newtheorem{proposition}[theorem]{Proposition}
\theoremstyle{definition}
\newtheorem{definition}[theorem]{Definition}
\newtheorem{exmp}[theorem]{Example}
\newtheorem{remark}[theorem]{Remark}
\newtheorem{example}[theorem]{Example}
\def\vv#1#2{\begin{pmatrix} #1 \\[5pt] #2 \end{pmatrix}}
\def\mm#1#2#3#4{\begin{pmatrix} #1 & #2 \\[5pt] #3 & #4 \end{pmatrix}}
\def\hA{\hat A}
\begin{document}

\begin{center}
\uppercase{\bf On the asymptotic behavior of density of sets defined by sum-of-digits function in base 2}
\vskip 20pt
{\bf Jordan Emme }\\
{\smallit Aix-Marseille Université, CNRS, Centrale Marseille, I2M, UMR 7373, 13453 Marseille, France.}\\
{\tt jordan.emme@univ-amu.fr}\\ 
\vskip 10pt
{\bf Alexander Prikhod'ko }\\
{\smallit Moscow Institute of Physics and Technology, Moscow, Russia.}\\
{\tt sasha.prihodko@gmail.com}\\ 
\end{center}
\vskip 30pt
\vskip 30pt

\centerline{\bf Abstract}
\noindent
Let $s_2(x)$ denote the number of occurrences of the digit ``$1$'' in the binary expansion of  $x$ in $\mathbb{N}$. 
We study the mean distribution $\mu_a$ of the quantity $s_2(x+a)-s_2(x)$ 
for a fixed positive integer $a$.
It is shown that solutions of the equation
$$ s_2(x+a)-s_2(x)= d $$
are uniquely identified by a finite set of prefixes in $\{0,1\}^*$, 
and that the probability distribution of differences $d$ is given by 
an infinite product of matrices whose coefficients are operators of $l^1(\mathbb{Z})$.
Then, denoting by $l(a)$ the number of occurrences of the pattern ``$01$'' in the binary expansion of $a$, we give the asymptotic behavior of this probability distribution as $l(a)$ goes to infinity, as well as estimates of the variance of the probability measure $\mu_a$.


\section{Introduction}

\subsection{Background}

In this article we are interested in the statistical behavior of the difference of the number of digits $1$ in the binary expansion of an integer $x$ before and after its summation with $a$. This kind of question can be linked with carry propagation problems developed in \cite{knuth} and \cite{pippenger}, and is linked to computer arithmetic as in \cite{muller} or \cite{ercegovac}, but our approach is different.

The main definitions of the paper are the following.

\begin{definition}
 For every integer $x$ in $\mathbb{N}$ whose binary expansion is given by
 $$x=\sum_{k=0}^{n} x_k 2^k,\quad x_k \in \{0,1\},$$
 we define the quantity 
 $$s_2(x)=\sum_{k=0}^{n} x_k,$$
 which is the number of occurrences of the digit $1$ in the binary expansion of $x$.
\end{definition}

\begin{definition}
 For every integer $x$ in $\mathbb{N}$ whose binary expansion is given by:
 $$x=\sum_{k=0}^{n} x_k 2^k,$$
 we denote by $\underline{x}$ the word $x_0...x_n$ in $\{0,1\}^{*}$.
\end{definition}

\begin{remark}
 Since we are working in the free monoid $\{0,1\}^{*}$ of binary words, let us state right away that we will denote the cylinder set of a word $w=w_0...w_n$ by the standard notation $[w]:= \{ v \in \{0,1\}^{\mathbb{N}} \quad | \quad v_0...v_n=w_0...w_n \}$. These sets form a topological basis of clopen sets of  $\{0,1\}^{\mathbb{N}}$ for the product topology. Moreover, we endow the set of binary configurations $\{0,1\}^{\mathbb{N}}$ with the natural probability measure $\mathbb{P}$ that is the balanced Bernoulli probability measure defined on the Borel sets.
\end{remark}

The function $s_2$ modulo 2 was extensively studied for its links with the Thue-Morse sequence (as found in \cite{keane}), for example, or for arithmetic reasons as in \cite{MauduitRivat1} and \cite{MauduitRivat2}. In this paper our motivation is not of the same nature. Thus we will not look at $s_2$ modulo 2, but instead at the function $s_2$, which is the sum of the digits in base 2.

In \cite{Besineau}, Bésineau studied the statistical independence of sets defined via functions such as $s_2$, \emph{i.e.} ``sum of digits'' functions. To this end he studied the correlation function defined in the following way.
\begin{definition}
 Let $f:\mathbb{N}\rightarrow \mathbb{C}$. Its correlation $\gamma_f:\mathbb{N}\rightarrow\mathbb{C}$ is the function, if it exists, defined by
 $$
 \gamma_f(a)=\lim_{N\rightarrow \infty} \frac{1}{N}\sum_{k=0}^N\overline{f(k)}f(k+a).
 $$
\end{definition}
This correlation was studied in \cite{Besineau} for functions of the form $k\mapsto e^{i\pi \alpha s(k)}$ where $s$ is a sum-of-digits function in a given base.

In the case of base 2, this motivates us to understand the following equation, with parameters $a$ in $\mathbb{N}$ and $d$ in $\mathbb{Z}$:
\begin{equation}\label{mainequation}
 s_2(x+a)-s_2(x)=d.
\end{equation}
Namely, we wish to understand, for given integers $a$ and $d$, the behavior of the density of the set $\{x \in \mathbb{N} \ | \ s_2(x+a)-s_2(x)=d \},$ which always exists (from \cite{Besineau, MorgenbesserSpiegelhofer} for instance).
We remark that we can define in this way, for every integer $a$, a probability measure $\mu_a$ on $\mathbb{Z}$  defined, for every $d$, by
$$
\mu_a(d):=\lim_{N\rightarrow \infty} \frac1N \quad \#  \left\{ x \leq N \quad | \quad s_2(x+a)-s_2(x)=d\right\}.
$$

\begin{exmp}
 Let us compute $\mu_1$ and deduce $\mu_{2^n}$ for every $n$.
 
 First, we remark that  $s_2(x+1)-s_2(x)\leq 1$ for every non-negative integer $x$, since there is only one ``1'' in the binary expansion of 1.
 
 It is easy to see that $\mu_1(1)$ is the density of the set of even integers, so we have \mbox{$\mu_1(1)=\frac12$}. We also remark that in order to get a difference $d \leq 0$, one has to propagate a carry in the summation $d+1$ times. In other words, suppose we wish to have  \mbox{$s_2(x+1)-s_2(x)=d$} with a negative $d$; then $\underline{x}$ has to start with $1^{-d+1}0$. This means that $x\equiv 2^{-d+1}-1 \ \text{mod}\ (2^{-d+2})$. The set $\{x \in \mathbb{N}\ |\ x\equiv 2^{-d+1}-1 \ \text{mod}\ (2^{-d+2})\}$ is of density $\frac{1}{2^{-d+2}}$.
 
 Finally, 
 $$
 \mu_1(d)=\left\{ \begin{array}{ccc}
                  0 & \text{if} & d>1\\
                  \frac{1}{2^{-d+2}}& \text{if} & d\leq1.
                 \end{array}\right.
 $$
 
 Now notice  that $\mu_{2a}=\mu_a$ for every $a$. This is proved later with Proposition~\ref{p.proba_rec}. Hence, for every $n$ in $\mathbb{N}$,
  $$
 \mu_{2^n}(d)=\left\{ \begin{array}{ccc}
                  0 & \text{if} & d>1\\
                  \frac{1}{2^{-d+2}}& \text{if} & d\leq1.
                 \end{array}\right.
 $$
\end{exmp}

Section~2 studies the set $\{x \in \mathbb{N} \ | \ s_2(x+a)-s_2(x)=d \}$ with a combinatorial approach, using constructions from theoretical computer science such as languages, graphs, and automata. In this way we prove that the probability $\mu_a$ is given by a product of matrices.

Section~3 is a technical part that allows us to understand the asymptotic behavior of such a measure as $a$ gets bigger in a certain, non-trivial, sense. This section develops the idea that our probability measure gets smaller as $l(a)$ increases.
Namely, we give an upper bound of the $l^2(\mathbb{Z})$ norm of the probability measure $\mu_a$ depending on the number of occurrences of ``$01$'' in $\underline{a}$. 

Finally, we give bounds on the variance of $\mu_a$, also depending on the number of patterns ``01'' in $\underline{a}$.

%

\subsection{Results}

The main results are the following.
\begin{theo}
 The distribution $\mu_a$ is calculated via 
an infinite product of matrices whose coefficients are operators of $l^1(\mathbb{Z})$ applied to a vector whose coefficients are elements of $l^1(\mathbb{Z})$:
$$ \mu_a = ( Id,\; Id ) \;  
    \cdots A_{a_n} A_{a_{n-1}} \cdots A_{a_1} A_{a_0} 
    \begin{pmatrix} \delta_0 \\ 0 \end{pmatrix},
$$
where the sequence $(a_n)_{n\in \mathbb{N}}$ is the binary expansion of~$a$,  $\delta_0$ is the Dirac mass in $0$, the $A_i$ are defined by
$$
    A_0 = \begin{pmatrix} Id & \frac12 {S^{-1}} \\ 0 & \frac12 {S} \end{pmatrix}, \qquad 
    A_1 = \begin{pmatrix} \frac12 {S^{-1}} & 0 \\ \frac12 {S} & Id \end{pmatrix}, 
$$
and $S$ is the left shift transformation on~$l^1(\mathbb{Z})$. 
\end{theo}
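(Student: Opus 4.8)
The plan is to realise the addition $x \mapsto x+a$ as a least-significant-bit-first sequential process and to read off the increment $s_2(x+a)-s_2(x)$ one digit at a time, carrying along only the information relevant to the future: the current carry bit and the partial value of the difference accumulated so far. First I would replace the Cesàro density defining $\mu_a$ by an expectation under the Bernoulli measure $\mathbb{P}$: writing $x=\sum_{k\ge 0} x_k 2^k$ with the $x_k$ i.i.d.\ uniform on $\{0,1\}$, the event $\{s_2(x+a)-s_2(x)=d\}$ is, up to a carry tail of exponentially small probability, determined by finitely many digits of $x$, so the limit $\frac1N\#\{x\le N:\dots\}$ coincides with $\mathbb{P}(s_2(x+a)-s_2(x)=d)$. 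This is exactly the finiteness-of-prefixes statement established in Section~1, which I would invoke here.

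For the transfer step I would introduce, for each $k$, the pair of sub-probability vectors $v^{(k)}=(v^{(k)}_0,v^{(k)}_1)\in l^1(\Set{Z})\oplus l^1(\Set{Z})$, where $v^{(k)}_c(d)$ is the $\mathbb{P}$-weight of those digit choices $x_0,\dots,x_{k-1}$ for which the carry entering position $k$ equals $c\in\{0,1\}$ and the partial difference $\sum_{j<k}(y_j-x_j)$ equals $d$, with $y_j$ the $j$-th digit of $x+a$. Since no addition has yet taken place, the initial state is $v^{(0)}=(\delta_0,0)$, matching the right-hand vector in the statement. The core computation is the one-digit update $v^{(k+1)}=A_{a_k}v^{(k)}$: fixing the digit $a_k$, I would enumerate the eight cases determined by $(a_k,c_k,x_k)$, recording in each the outgoing carry $c_{k+1}=\lfloor(a_k+x_k+c_k)/2\rfloor$ and the digit increment $y_k-x_k\in\{-1,0,+1\}$, each case weighted by $\tfrac12$ from the uniform choice of $x_k$. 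An increment $+1$ shifts the difference distribution by $S^{-1}$ and an increment $-1$ by $S$ (with $S$ the left shift), and collecting the cases by outgoing carry produces precisely the matrices $A_0$ and $A_1$ of the statement; one checks in passing that each $A_{a_k}$ preserves total mass.

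Finally, since the binary expansion of $a$ is finite, beyond its top digit $a_n$ every further digit is $0$, so the process continues by applying $A_0$ indefinitely, which accounts for the propagation and eventual absorption of any remaining carry. Here I would prove convergence of the infinite product: the carry-$1$ block of $A_0$ acts as $\tfrac12 S$, a strict contraction of norm $\tfrac12$ on $l^1(\Set{Z})$, so the carry-$1$ mass decays like $2^{-j}$ while the carry-$0$ component converges, the geometric series $\tfrac12 S^{-1}\sum_{j\ge0}(\tfrac12 S)^j$ summing to $\tfrac12 S^{-1}(Id-\tfrac12 S)^{-1}$. In the limit all mass collects in the carry-$0$ slot, and applying the row $(Id,Id)$ simply sums over the two carry states, returning the genuine probability distribution on $\Set{Z}$, which is $\mu_a$. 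The main obstacle is not the algebra of the update, which is a finite case check, but the two analytic justifications on either end: identifying the arithmetic density with the Bernoulli expectation, controlling the unbounded carry tail uniformly in $d$, and proving that the infinite operator product converges in the appropriate topology on the operators of $l^1(\Set{Z})$.
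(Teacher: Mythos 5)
Your proposal is correct and is essentially the paper's own argument: the pair of states you call ``carry $0$/carry $1$'' is exactly what the paper encodes as the two vertices per level of the collapsed summation tree, your vectors $v^{(k)}$ are the paper's $(\eta_{a,k}^1,\eta_{a,k}^2)$, and your eight-case digit update produces the same matrices $A_0,A_1$. The only cosmetic differences are that the paper reaches the carry automaton via a tree construction and its collapse, and disposes of the infinite tail of $A_0$'s by noting that for each fixed $d$ the product stabilizes after finitely many factors rather than by your (equally valid) geometric-contraction argument.
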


\begin{remark}
 We recall that the set of finite measures on $\mathbb{Z}$ is in bijection with the elements of $l^1(\mathbb{Z})$. We will always identify finite measures on $\mathbb{Z}$ and elements of $l^1(\mathbb{Z})$.
\end{remark}

Such a result allows an analytical study of these distributions as the binary expansion of $a$ becomes more and more `complicated.' Let us formalize this notion of complexity for $a$.

\begin{definition}
 For every $a$ in $\mathbb{N}$, let us denote by $l(a)$ the number of subwords $01$ in the binary expansion of $a$.
\end{definition}

\begin{remark}
 We advise the reader to be careful, as this notion of `complexity' has nothing to do whatsoever with the classical one in the field of language theory.
\end{remark}

We are thus interested in what happens as there are more and more patterns $01$ in the word $\underline{a}$. We can estimate precisely the asymptotic behavior of the $l^2(\mathbb{Z})$ norm  $\|\cdot\|_2$ of this distribution as $a$ goes to infinity by increasing the number of subwords $01$.

Namely, we have the following theorem.
\begin{theo}
There exists a real constant $C_0$ such that for every integer $a$ we have the following:
$$
  \|\mu_a\|_2 \le C_0 \cdot l(a)^{-1/4}.
$$
\end{theo}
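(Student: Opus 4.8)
The plan is to pass to the Fourier side. By Parseval, writing $z=e^{2\pi i\theta}$ and $\mathbb{T}=[0,1]$,
$$
\|\mu_a\|_2^2=\sum_{d\in\mathbb{Z}}\mu_a(d)^2=\int_0^1\bigl|\widehat{\mu_a}(\theta)\bigr|^2\,d\theta ,
$$
so it suffices to bound this integral by $C_0^2\,l(a)^{-1/2}$. Under the Fourier transform $\ell^1(\mathbb{Z})\to C(\mathbb{T})$ the shift $S$ becomes multiplication by $\bar z=z^{-1}$ and $S^{-1}$ by $z$, so the operator matrices of the first Theorem become the scalar matrices
$$
A_0(\theta)=\begin{pmatrix}1 & \tfrac12 z\\[3pt]0 & \tfrac12 \bar z\end{pmatrix},\qquad
A_1(\theta)=\begin{pmatrix}\tfrac12 z & 0\\[3pt]\tfrac12 \bar z & 1\end{pmatrix},
$$
and, since $\widehat{\delta_0}\equiv 1$ and all high-order bits of $a$ vanish,
$$
\widehat{\mu_a}(\theta)=(1,1)\,\Bigl(\textstyle\prod_{k\ge 0}A_{a_k}(\theta)\Bigr)\begin{pmatrix}1\\0\end{pmatrix},
$$
the product carrying infinitely many factors $A_0(\theta)$ coming from the leading zeros.

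The heart of the argument is the pointwise estimate $|\widehat{\mu_a}(\theta)|\le C\,\rho(\theta)^{\,l(a)}$, where $\rho(\theta)$ is the spectral radius of the one-period matrix $T(\theta):=A_1(\theta)A_0(\theta)$. A direct computation gives $\det T(\theta)=\tfrac14$ and $\operatorname{tr}T(\theta)=\tfrac14+\cos 2\pi\theta$, whence $\rho(\theta)=\tfrac12$ whenever $\cos 2\pi\theta\le\tfrac34$, while for $\cos 2\pi\theta>\tfrac34$ the eigenvalues are real and $\rho(\theta)=\tfrac12\bigl(\tfrac14+\cos 2\pi\theta+\sqrt{(\tfrac14+\cos 2\pi\theta)^2-1}\bigr)$. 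In particular $\rho$ is continuous and depends only on $\cos 2\pi\theta$, $\rho(0)=1$ is its only maximum, $\rho(\theta)<1$ for $\theta\ne 0$, and $\rho(\theta)=1-\tfrac{8\pi^2}{3}\theta^2+O(\theta^4)$ near $0$. To obtain the bound I would decompose $\underline a$ (with its trailing zeros) into maximal runs $0^{p_0}1^{q_1}0^{p_1}\cdots 1^{q_m}0^{\infty}$ and group the matrices into blocks $A_1^{q_i}A_0^{p_{i-1}}$, each of which carries exactly one subword $01$ precisely when $p_{i-1}\ge 1$; the number of such genuine blocks is $l(a)$. The key step is then a uniform contraction estimate: there is a norm $\|\cdot\|_\theta$ on $\mathbb{C}^2$ in which $A_0(\theta)$ and $A_1(\theta)$ are non-expanding while every block $A_1^{q}A_0^{p}$ with $p,q\ge 1$ has norm at most $\rho(\theta)$, uniformly in the run lengths $p,q$. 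Multiplying the block bounds yields the displayed pointwise inequality.

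Feeding this into Parseval gives
$$
\|\mu_a\|_2^2\ \le\ C^2\int_0^1\rho(\theta)^{2l(a)}\,d\theta ,
$$
and the right-hand side is evaluated by Laplace's method: since $\rho$ attains its maximum $1$ only at $\theta=0$ with nondegenerate quadratic behaviour $\rho(\theta)\approx e^{-\frac{8\pi^2}{3}\theta^2}$, one gets $\int_0^1\rho(\theta)^{2l}\,d\theta\sim c\,l^{-1/2}$ as $l\to\infty$, so that $\|\mu_a\|_2^2\le C'\,l(a)^{-1/2}$ and hence $\|\mu_a\|_2\le C_0\,l(a)^{-1/4}$. Equivalently one may split the integral at $|\theta|=\delta\asymp l(a)^{-1/2}$, using the trivial bound $|\widehat{\mu_a}|\le 1$ on the small central interval and the geometric decay $|\widehat{\mu_a}|\le C\rho(\theta)^{l(a)}$ outside it; this variant also sidesteps the degeneration of $\|\cdot\|_\theta$ as $\theta\to 0$.

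I expect the uniform-in-$(p,q)$ block contraction to be the delicate point. The naive guess that each subword $01$ contributes a fixed factor $(5-4\cos 2\pi\theta)^{-1/2}$ is \emph{false} (a short computation for $a=10$, i.e. $\underline a=0101$, already violates it at $\theta=\tfrac14$), so the correct rate is the spectral radius of the \emph{period} matrix rather than a single entry, and one must control a product of non-commuting matrices with arbitrary exponents. Constructing the norm $\|\cdot\|_\theta$ that realizes this spectral radius uniformly---most critically near $\theta=0$, where the spectral gap of $T(\theta)$ closes and $\rho(\theta)\to 1$---is where the real work lies, and the splitting-at-$\delta$ device above is the cleanest way I know to neutralize that degeneration.
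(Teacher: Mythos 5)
Your overall strategy --- pass to the Fourier side, bound $|\hat\mu_a(\theta)|$ by a quantity that equals $1$ only at $\theta=0$ raised to a power comparable to $l(a)$, then integrate a Gaussian-type peak via Parseval --- is exactly the paper's, and your computations of $\det T(\theta)=\tfrac14$, $\operatorname{tr}T(\theta)$, $\rho(\theta)=1-\tfrac{8\pi^2}{3}\theta^2+O(\theta^4)$ are correct. But the heart of your argument, the pointwise bound $|\hat\mu_a(\theta)|\le C\,\rho(\theta)^{l(a)}$, is only asserted. You propose to derive it from the existence of a norm $\|\cdot\|_\theta$ in which $A_0(\theta)$, $A_1(\theta)$ are non-expanding and \emph{every} block $A_1^qA_0^p$ ($p,q\ge1$) has norm at most $\rho(\theta)$, uniformly in $p,q$. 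That is a joint-spectral-radius statement for an infinite family of matrices, with the required contraction factor equal to the exact spectral radius of a single member of the family; nothing in the proposal establishes it, and you yourself flag it as ``where the real work lies.'' Moreover, even granting such a norm, you must return to a fixed norm to bound the scalar $(1,1)\,\Pi_a(\theta)\,(1,0)^{T}$, and the equivalence constant $C=C(\theta)$ between $\|\cdot\|_\theta$ and the standard norm then enters the Parseval integral; if $C(\theta)$ blows up like $\theta^{-\alpha}$ near the peak, the split-at-$\delta\asymp l^{-1/2}$ device gives $l^{\alpha-1/2}$ rather than $l^{-1/2}$, so it does not neutralize the degeneration as claimed. (Incidentally, the place where the adapted norm must degenerate is where the two eigenvalues of $T(\theta)$ collide, i.e.\ $\cos 2\pi\theta=\tfrac34$, not $\theta=0$: at $\theta=0$ the eigenvalues are $1$ and $\tfrac14$.)

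The paper sidesteps all of this with a fixed, explicit norm: the maximum-column-sum norm $\|\cdot\|_1$, in which $\|\hA_0\|_1=\|\hA_1\|_1=\|\hA_0\hA_1\|_1=1$ and, by a direct computation, $\|\hA_0\hA_1\hA_0\|_1=\phi(\theta):=\tfrac14\bigl(1+\sqrt{5+4\cos\theta}\bigr)$. The key lemma (Lemma~\ref{l.reductionto010}) is the elementary monotonicity $\|\hA_0\hA_1^k\hA_0\|_1\le\|\hA_0\hA_1\hA_0\|_1$ for all $k\ge1$, proved by tracking how the column sums evolve with $k$. Applying this to every \emph{other} pattern $01\cdots10$ (so that no single $0$ has to serve two blocks) and using submultiplicativity for the remaining factors gives $|\hat\mu_a(\theta)|\le\phi(\theta)^{(l(a)-1)/2}$ with constant $1$, and $\phi(\theta)\le e^{-\theta^2/15}$ finishes via the Gaussian integral. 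So your proposal is not wrong in spirit, but as written it contains a genuine gap at its central step; replacing the spectral-radius-plus-adapted-norm mechanism by an explicit submultiplicative norm computation on short blocks is what actually closes it.
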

Finally, in the last section, we wish to obtain a much more precise result regarding the behavior of such a distribution than just estimates of the $l^2$ norm. So we study the way in which the variance of the random variable of probability law $\mu_a$ is linked to the number of subwords that are $01$ in the binary expansion of $a$.
We define the variance of $\mu_a$ to be the quantity
$$
\text{Var}(a)=\sum_{d \in \mathbb{Z}} \mu_a(d)(n-m_{a,1})^2,
$$
where $m_{a,1}$ is the first moment (the mean) of the probability measure $\mu_a$.
It is shown in this last section that these moments (the mean and the variance) do exist; in fact, the mean is always zero. Moreover, we have bounds on this variance as shown in the following result.
\begin{theo}
 For every integer $a$ such that $l(a)$ is large enough, the variance of $\mu_a$, denoted by $\text{Var}(a)$,  has bounds
 
 $$
 l(a)-1 \leq \text{Var}(a) \leq 2(2 l(a)+1).
 $$
\end{theo}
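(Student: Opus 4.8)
The plan is to pass to the characteristic function and read the variance off the matrix product of the first theorem. Identifying a finite measure $\mu$ on $\Set{Z}$ with its Fourier transform $\hat\mu(\theta)=\sum_{d}\mu(d)e^{id\theta}$, the shift becomes multiplication by a unimodular scalar, $S\mapsto e^{-i\theta}$ and $S^{-1}\mapsto e^{i\theta}$, while $\delta_0\mapsto 1$. Thus the operator matrices $A_0,A_1$ turn into the scalar $2\times 2$ matrices
\[ M_0(\theta)=\begin{pmatrix}1 & \tfrac12 e^{i\theta}\\ 0 & \tfrac12 e^{-i\theta}\end{pmatrix},\qquad M_1(\theta)=\begin{pmatrix}\tfrac12 e^{i\theta} & 0\\ \tfrac12 e^{-i\theta} & 1\end{pmatrix}, \]
and the product formula reads $\varphi_a(\theta):=\hat\mu_a(\theta)=(1,1)\,M_{a_n}(\theta)\cdots M_{a_0}(\theta)\,(1,0)^{T}$. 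Since $\mu_a$ is a probability measure, its mean is $-i\varphi_a'(0)$ and its variance is $-\varphi_a''(0)+(\varphi_a'(0))^2$, so the whole statement reduces to the first two $\theta$-derivatives of this product at $\theta=0$.

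The key algebraic observation is that $M_0(0),M_1(0)$ are column-stochastic with common left eigenvector $(1,1)$ (eigenvalue $1$), which is exactly conservation of total mass, and moreover $(1,1)M_0'(0)=(1,1)M_1'(0)=(0,0)$. Differentiating the product once and using that every factor to the left of the differentiated one preserves $(1,1)$ gives $\varphi_a'(0)=\sum_j (1,1)M_{a_j}'(0)\,w_j=0$, so $\mu_a$ is centered and the variance is simply $-\varphi_a''(0)$. Differentiating twice produces ``diagonal'' terms (one $M''$) and ``cross'' terms (two $M'$); every cross term carries a factor $(1,1)M_{\cdot}'(0)=0$ on its left and hence vanishes. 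Only the diagonal terms survive, and since $(1,1)M_0''(0)=(0,-1)$ and $(1,1)M_1''(0)=(-1,0)$ I obtain the single sum
\[ \mathrm{Var}(\mu_a)=\sum_{j:\,a_j=0}p_j+\sum_{j:\,a_j=1}(1-p_j),\qquad w_j=\begin{pmatrix}1-p_j\\ p_j\end{pmatrix}:=M_{a_{j-1}}(0)\cdots M_{a_0}(0)\begin{pmatrix}1\\0\end{pmatrix}. \]
Here $w_j$ is a probability vector and $p_j$ is the probability of a carry entering position $j$ when $a$ is added to a uniform integer; from $M_0(0),M_1(0)$ one reads the recursion $p_0=0$, with $p_{j+1}=\tfrac12 p_j$ if $a_j=0$ and $p_{j+1}=\tfrac12(1+p_j)$ if $a_j=1$.

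The decisive simplification is the telescoping identity $p_j=2(p_j-p_{j+1})$ on runs of $0$'s and $1-p_j=2(p_{j+1}-p_j)$ on runs of $1$'s, i.e. each summand equals $2|p_{j+1}-p_j|$, so $\mathrm{Var}(\mu_a)=2\sum_j|p_{j+1}-p_j|$ is exactly twice the total variation of $(p_j)$. Since $p\mapsto\tfrac12(1+p)$ is an increasing contraction towards $1$ and $p\mapsto\tfrac12 p$ towards $0$, the sequence $(p_j)$ is monotone on each maximal run of $a$ — increasing on runs of $1$'s, decreasing on runs of $0$'s — starting at $0$ and tending to $0$, so its total variation equals twice the sum of its local maxima (one per maximal run of $1$'s) minus twice the sum of its interior local minima. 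As the number of maximal runs of $1$'s is $l(a)$ up to a boundary term, this is what converts the total variation into a bound in terms of $l(a)$: summing the geometric contribution $(1-p_s)\cdot 2(1-2^{-m})\le 2$ of a run of $1$'s of length $m$ with entry value $p_s$, and likewise $p_s\cdot 2(1-2^{-m})\le 2$ for a run of $0$'s, then bounding each peak by $1$ and each valley by $0$, yields the upper estimate, while keeping only the first ascending term $1-p_s$ of each run of $1$'s yields the lower estimate of the same order.

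The main obstacle is this last step: extracting the precise constants $l(a)-1$ and $2(2l(a)+1)$ rather than a bound merely of order $l(a)$. The difficulty is that the runs are coupled — the contribution of each run is governed by its entry value, which is the peak (resp. valley) left by the preceding run through the recursion — so the per-run estimates cannot be applied in isolation but must be tracked coherently through the peaks $P_k$ and valleys $V_k$, controlling $\sum_k P_k-\sum_k V_k$. One must also treat carefully the two boundary runs, namely a possible run of $1$'s based at position $0$ (whose entry value $p_0=0$ is forced) and the infinite tail of $0$'s above the most significant bit (through which $p_j\to 0$, contributing the final descending part of the total variation); it is precisely this boundary bookkeeping, together with the hypothesis that $l(a)$ be large enough, that fixes the additive constants in the two inequalities.
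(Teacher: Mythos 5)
Your setup coincides with the paper's: pass to the characteristic function, Taylor-expand the scalar matrices to second order, use $(1,1)M_k(0)=(1,1)$ and $(1,1)M_k'(0)=0$ to kill the mean and the cross terms, and express the variance as $\sum_{a_j=0}p_j+\sum_{a_j=1}(1-p_j)$ with $p_j$ the carry probability (this is exactly the paper's $-2V(a)=\sum_j -2(1,1)\beta_{a_j}v_j$ plus a tail term). Your reformulation of this sum as twice the total variation of $(p_j)$ is correct and pleasant, and your upper bound (each maximal run contributes at most $2$, and there are at most $2l(a)+1$ runs) is sound and is essentially the paper's Lemma on $\sum_j 2^{-b^1_j}$.

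The genuine gap is the lower bound, and you in effect concede it yourself in your last paragraph. Your stated strategy --- ``keeping only the first ascending term $1-p_s$ of each run of $1$'s'' --- gives at best $1-p_s\ge \tfrac12$ for each of the $l(a)$ runs of $1$'s (since the preceding $0$ forces $p_s\le\tfrac12$), hence only $-2V(a)\ge l(a)/2$, which for large $l(a)$ is strictly weaker than the claimed $l(a)-1$. To reach the stated constant you must harvest a contribution from \emph{every} maximal block, of $0$'s and of $1$'s alike: the paper's route is the two-sided estimate $2^{-b^1_j}\bigl(1-2^{-b^2_j}\bigr)\le -(1,1)\beta_{a_j}v_j\le 2^{-b^1_j}$ (where $b^1_j,b^2_j$ are the lengths of the current and previous blocks), after which each block's positions sum to a geometric series lying in $[\tfrac12,1]$, the factor $1-2^{-b^2_j}\ge\tfrac12$ holds for every block except the lowest one, and the count of $\ge 2l(a)$ blocks yields $-V(a)\ge\tfrac12(l(a)-1)$. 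In your language this amounts to controlling the peaks and valleys jointly rather than discarding the descending runs; as written, that bookkeeping --- including the tail term $p_{N+1}$ coming from $\hA_0^\infty$ --- is announced as an obstacle but not carried out, so the theorem's inequalities are not actually proved.
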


\section{Statistics of binary sequences}

In this section we wish to understand the following quantity: 
$$
\mu_a(d):=\lim_{N\rightarrow \infty} \frac1N \quad \#  \left\{ x \leq N \quad | \quad s_2(x+a)-s_2(x)=d\right\},
$$
for every given positive integer $a$ and every integer $d$.

We know from \cite{Besineau} that such a limit exists, and it has been studied in \cite{MorgenbesserSpiegelhofer}, for example, but here we give a proof using the structure of solutions of the following equation:
$$
s_2(x+a)-s_2(x)=d
$$
for every $a$ and $d$.

Let us investigate such solutions, as well as their construction, in order to understand the distribution of probability $\mu_a$  of differences $d= s_2(x+a)-s_2(x)$. We prove that this distribution is given by an infinite product of matrices whose sequence is given by the binary expansion of $a$. 

\subsection{Combinatorial description of summation tree}

First we prove the following lemma.

\begin{lemma}\label{lemma.prefixes}
  \it For all $a$ in $\mathbb{N}$ and $d$ in $\mathbb{Z}$, there exists a finite set of words \mbox{$\mathcal{P}_{a,d}=\{p_1,...,p_k\}\subset \{0,1\}^*$} such that $x$ is solution of (\ref{mainequation}) if and only if
  $$\underline{x} \in \bigcup_{j=1}^k [p_j]$$
  where, for every word $w$ in $\{0,1\}^*$, $[w]$ denotes the cylinder set of words whose prefix is $w$.
\end{lemma}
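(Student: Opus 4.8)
The plan is to analyze the schoolbook addition $x+a$ digit by digit, from the least significant bit upward, while tracking the carry. Write $\underline{a}=a_0a_1\cdots a_m$ with $a_m=1$, so that $m$ is the position of the highest set bit of $a$, and set $y:=x+a$ with binary digits $y_k$. Define carries by $c_0=0$ and $c_{k+1}=\lfloor (x_k+a_k+c_k)/2\rfloor$, so that $y_k=(x_k+a_k+c_k)\bmod 2$. The quantity to control is $s_2(x+a)-s_2(x)=\sum_{k\ge 0}(y_k-x_k)$, and the whole strategy is to show this sum is already determined by a finite prefix of $\underline{x}$. First I would isolate the tail: for $k>m$ we have $a_k=0$, hence $y_k=(x_k+c_k)\bmod 2$ and $c_{k+1}=x_kc_k$. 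Thus if $c_{m+1}=0$ then $c_k=0$ and $y_k=x_k$ for all $k>m$, so the tail contributes nothing; whereas if $c_{m+1}=1$ the carry simply runs through the maximal block of consecutive $1$'s of $\underline{x}$ starting at position $m+1$, turning each such $1$ into a $0$ and finally flipping the first subsequent $0$ into a $1$ before dying (such a $0$ exists since $x$ has finitely many nonzero digits). If $j\ge 0$ denotes the length of that block of $1$'s, the tail contributes exactly $1-j$ in this case.

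Next I would record that the head contribution $D:=\sum_{k=0}^{m}(y_k-x_k)$ and the value $c_{m+1}$ depend only on the finite word $x_0\cdots x_m$, of which there are just $2^{m+1}$ choices. Combining with the tail analysis, for each fixed head word exactly one alternative occurs: if $c_{m+1}=0$ the difference equals $D$ and the solution set is the single cylinder $[x_0\cdots x_m]$; if $c_{m+1}=1$ the difference equals $D+1-j$, so requiring it to equal $d$ pins down the block length $j=D+1-d$ uniquely, and the relevant prefix is $x_0\cdots x_m \underbrace{1\cdots 1}_{j}0$ (we discard this head when the resulting $j$ is negative). In either branch at most one prefix per head word contributes to the event $\{d\}$, so $\mathcal{P}_{a,d}$ has at most $2^{m+1}$ elements and is finite.

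Finally I would assemble the equivalence. Since $s_2(x+a)-s_2(x)$ is constant, equal to $d$, on each cylinder $[p_j]$ just constructed (the digits beyond the end of $p_j$ are left untouched by the carry, so they do not affect the difference), the implication $\underline{x}\in\bigcup_j[p_j]\Rightarrow$ ``$x$ solves (\ref{mainequation})'' is immediate. Conversely, every configuration $\underline{x}$ determines its head word $x_0\cdots x_m$ and, if $c_{m+1}=1$, the ensuing block of $1$'s, hence lies in exactly one of the minimal prefix cylinders produced above; if its difference is $d$, that prefix is among the $p_j$. Distinct head words give cylinders with different first $m+1$ bits and are therefore disjoint, which also shows the $[p_j]$ are pairwise disjoint.

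I expect the only delicate point — and the main obstacle — to be the finiteness claim itself. The prefixes are \emph{not} of bounded length: as $d\to-\infty$ the forced block length $j=D+1-d$ grows without bound, so the cylinders become arbitrarily deep. Finiteness comes instead from the fact that, for a \emph{fixed} $d$, the block length $j$ is rigidly determined by the head word, leaving at most $2^{m+1}$ surviving prefixes. Making this dependence of $D$ and $c_{m+1}$ on $x_0\cdots x_m$ fully precise through the carry recursion is the part that needs the most careful bookkeeping.
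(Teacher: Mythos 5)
Your proof is correct, but it takes a genuinely different route from the paper. The paper argues by strong induction on $a$: it splits according to the parity of $a$ and of $x$ and reduces $\mathcal{P}_{a+1,d}$ to the sets $\mathcal{P}_{a/2,\,d-1}$, $\mathcal{P}_{a/2+1,\,d+1}$ (for $a$ even) or $\mathcal{P}_{(a+1)/2,\,d}$ (for $a$ odd); finiteness is inherited through the recursion. That recursion is not incidental --- it is exactly what gets packaged into the summation tree $\tau_a$, the identities $\mu_{2a}=\mu_a$ and $\mu_{2a+1}(d)=\frac12\mu_a(d-1)+\frac12\mu_{a+1}(d+1)$, and ultimately the matrix-product formula of Theorem~\ref{thm.distrib}, so the paper's choice of proof is doing double duty. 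Your argument instead analyses the addition directly via the carry recursion, splitting $\underline{x}$ into the head $x_0\cdots x_m$ (which determines the head contribution $D$ and the outgoing carry $c_{m+1}$) and the tail (where the carry, if present, consumes a block of $j$ ones and contributes $1-j$). This buys you things the paper's proof does not give: an explicit cardinality bound $|\mathcal{P}_{a,d}|\le 2^{m+1}$, an explicit normal form $x_0\cdots x_m 1^{j}0$ for the prefixes with $j=D+1-d$, pairwise disjointness of the cylinders, and a transparent explanation of why the prefixes have unbounded length as $d\to-\infty$ even though their number stays finite. Your tail analysis is sound ($c_{k+1}=x_kc_k$ for $k>m$, so the carry dies at the first zero after position $m$, which exists because $x$ is an integer), and your identification of finiteness --- one forced value of $j$ per head word --- as the crux is exactly right. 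The only mild caveat, shared with the paper, is the implicit identification of the finite word $\underline{x}$ with its zero-extension in $\{0,1\}^{\Set{N}}$ when writing $\underline{x}\in[p_j]$.
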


\medskip

\begin{remark}
 Before we prove Lemma~\ref{lemma.prefixes}, let us remark that for every even integer $n$ the following holds:
 $$
 s_2(n)=s_2\left(\frac{n}{2}\right).
 $$
 We also remark that, for every odd integer $n$, the following holds:
 $$
  s_2(n)=s_2\left(\frac{n-1}{2}\right)+1.
 $$
\end{remark}

\begin{proof}
 Let us prove this lemma by induction on $a$. It is obviously true that for $a=1$, there exists a (possibly empty) set of words for every $d$ in $\mathbb{Z}$ describing the solutions of Equation~(\ref{mainequation}). Let us assume that it is true for every integer not greater than some given $a$ in $\mathbb{N}$. Now let $k$ be an integer not greater than $2a$.
 \begin{itemize}
  \item  If $k$ is even then, for every $d$, $\mathcal{P}_{k,d}=\{0w, w\in \mathcal{P}_{\frac{k}{2},d}\}\cup\{1w, w\in \mathcal{P}_{\frac{k}{2},d}\}$. 

 Indeed, $x$ being an even solution of $s_2(x+k)-s_2(x)=d$ is equivalent to having $$s_2\left(\frac{x+k}{2}\right)-s_2\left(\frac{x}2\right)=d$$ which can be written as  $$s_2\left(\frac{x}2+\frac{k}2\right)-s_2\left(\frac{x}2\right)=d.$$  From the induction hypothesis it follows that $\underline{x}$ starts with a $0$ followed by a word in $\mathcal{P}_{\frac{k-1}{2},d}$.
 
 Moreover, $x$ being an odd solution of $s_2\left(x+k\right)-s_2\left(x\right)=d$ is equivalent to having $$s_2\left(\frac{x-1+k}2\right)+1-\left(s_2\left(\frac{x-1}2\right)+1\right)=d$$ which can be written as $$s_2\left(\frac{x-1}2+\frac{k}2\right)-s_2\left(\frac{x-1}2\right)=d.$$ Then, by the induction hypothesis, $\underline{x}$ must start with a $1$ followed by a word in $\mathcal{P}_{\frac{k}{2},d}$.
 
 \item If, however, $k$ is odd, then  $\mathcal{P}_{k,d}=\{0w, w\in \mathcal{P}_{\frac{k-1}{2},d-1}\}\cup\{1w, w\in \mathcal{P}_{\frac{k+1}{2},d+1}\}.$
 
 Indeed, $x$ being an even solution of $s_2\left(x+k\right)-s_2\left(x\right)=d$ is equivalent to having $$s_2\left(\frac{x+k-1}{2}\right)+1-s_2\left(\frac{x}2\right)=d$$ which can be written as $$s_2\left(\frac{x}2+\frac{k-1}2\right)-s_2\left(\frac{x}2\right)=d-1.$$  From the induction hypothesis it follows that $\underline{x}$ starts with a $0$ followed by a word in $\mathcal{P}_{\frac{k-1}{2},d-1}$.
 
  Moreover, $x$  being an odd solution of $s_2\left(x+k\right)-s_2\left(x\right)=d$ is equivalent to having $$s_2\left(\frac{x-1+k+1}2\right)-\left(s_2\left(\frac{x-1}2\right)+1\right)=d$$ which can be written as $$s_2\left(\frac{x-1}2+\frac{k+1}2\right)-s_2\left(\frac{x-1}2\right)=d+1.$$ Then, by the induction hypothesis, we can state that $\underline{x}$ must start with a $1$ followed by a word in $\mathcal{P}_{\frac{k+1}{2},d+1}$.
 \end{itemize}

Hence knowing the sets $\mathcal{P}_{1,d}$ for every integer $d$ allows to inductively compute the sets $\mathcal{P}_{a,d}$ for every $a$ in $\mathbb{N}$.
\end{proof}

\begin{remark}
 It should be noted that such a set is not unique. For instance, the sets $\{01,00\}$ and $\{0\}$ can both qualify as $\mathcal{P}_{0,1}$ with the lemma's notations.
\end{remark}

From the previous lemma it follows immediately that, for all integers $d$ and positive integers $a$, the sequence $\left(\#  \left\{ x \leq N  |  s_2(x+a)-s_2(x)=d\right\}\right)_{N\in\mathbb{N}}$ can be written as a sum of sequences, all of which contain an arithmetic progression. Thus the following limit exists:
$$
\mu_a(d):=\lim_{N\rightarrow \infty} \frac1N \quad \#  \left\{ x \leq N \quad | \quad s_2(x+a)-s_2(x)=d\right\}.
$$
Moreover, a quick computation yields
$$
\mu_a(d)=\mathbb{P} \left(\bigcup_{p \in \mathcal{P}_{a,d}}[p]\right)
$$
where $\mathbb{P}$ is the balanced Bernoulli probability measure on $\{0,1\}^{\mathbb{N}}$.

\begin{remark}
 In all that follows, we always consider sets $\mathcal{P}_{a,d}$ such that for every words $ p_1,p_2$ in $\mathcal{P}_{a,d}$,
\begin{equation}\label{cond.prefixes}
 p_1\neq p_2 \implies [p_1]\cap[p_2]=\emptyset.
\end{equation}
So in this case,
$$
\mu_a(d)=\sum_{p \in \mathcal{P}_{a,d}}\mathbb{P}([p]).
$$
\end{remark}

\begin{remark}\label{r.prefixes}
 Note that a sufficient condition for a set $\mathcal{P}_{a,d}$ to satisfy (\ref{cond.prefixes}) is that all the prefixes be of same length since two cylinders defined by prefixes of the same length are disjoint whenever the prefixes are different.
\end{remark}

The proof of Lemma~\ref{lemma.prefixes} naturally gives the idea of an inductive way to compute the prefixes. A comfortable way to do that is to span, for every $a$, a tree $\tau_a$ in such a way that the family of trees obtained is consistent with the following recursive properties for every $k$:
$$
\mathcal{P}_{2k,d}=\{0w, w\in \mathcal{P}_{k,d}\}\cup\{1w, w\in \mathcal{P}_{k,d}\}
$$
and
$$
\mathcal{P}_{2k+1,d}=\{0w, w\in \mathcal{P}_{k,d-1}\}\cup\{1w, w\in \mathcal{P}_{k+1,d+1}\}
$$
\begin{lemma}\label{l.tree}
 For each $a$ in $\mathbb{N}$, there exists a tree $\tau_a$ with vertices labeled in $\{0,...,a\}\times\mathbb{Z}$ and edges labeled in $\{0,1\}$ such that, for every $d$ in $\mathbb{Z}$, words in $\mathcal{P}_{a,d}$ are exactly paths labels from the vertex $(a,0)$ to a vertex $(0,d)$.
\end{lemma}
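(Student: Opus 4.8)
The plan is to read the tree directly off the recursion for the sets $\mathcal{P}_{a,d}$ obtained inside the proof of Lemma~\ref{lemma.prefixes}. I would define $\tau_a$ as a labelled subtree of the full binary tree $\{0,1\}^*$, with root labelled $(a,0)$ and labels propagated along edges by the two cases of that recursion. The intended meaning of a vertex labelled $(b,c)$ is that $b$ is the part of the addend still to be processed while $c$ is the value of $s_2(x+a)-s_2(x)$ accumulated by the digits already read. Concretely I would impose: if $b$ is even with $b\ge 2$, the vertex has two children, along the edges $0$ and $1$, both labelled $(b/2,\,c)$; if $b$ is odd, the edge $0$ leads to $((b-1)/2,\,c+1)$ and the edge $1$ to $((b+1)/2,\,c-1)$; and a vertex with $b=0$ is a leaf. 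These shifts of $c$ are forced by comparison with the identities $\mathcal{P}_{b,d}=\{0w:w\in\mathcal{P}_{(b-1)/2,d-1}\}\cup\{1w:w\in\mathcal{P}_{(b+1)/2,d+1}\}$ for $b$ odd and $\mathcal{P}_{b,d}=\{0w:w\in\mathcal{P}_{b/2,d}\}\cup\{1w:w\in\mathcal{P}_{b/2,d}\}$ for $b$ even.

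With $\tau_a$ so defined, the statement to prove is the invariant $(\ast)$: for every vertex labelled $(b,c)$ and every $d\in\Set{Z}$, the labels of the finite paths from that vertex to a leaf labelled $(0,c+d)$ are exactly the words of $\mathcal{P}_{b,d}$; the lemma is then the case $b=a$, $c=0$. I would establish $(\ast)$ by strong induction on the first coordinate $b$, keeping both $c$ and $d$ universally quantified so that the induction hypothesis can be applied at the shifted coordinates $c\pm 1$. The inductive step is a direct transcription of the two recursion identities: one prepends to a path from a child the digit read on the first edge, matching targets through $c+d=(c+1)+(d-1)=(c-1)+(d+1)$ in the odd case and $c+d=c+d$ in the even case.

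The one real obstacle is that the recursion fails to decrease $b$ when $b=1$, since the edge $1$ sends $(1,c)$ to $(1,c-1)$; thus $\tau_a$ is an infinite tree (the all-ones branch never meets a leaf) and a naive induction on $b$ stalls at $b=1$. I would therefore treat $b\in\{0,1\}$ as base cases checked by hand: for $b=0$ the only path to a leaf is the empty word and $\mathcal{P}_{0,0}=\{\epsilon\}$ with $\mathcal{P}_{0,d}=\emptyset$ otherwise, while for $b=1$ the finite paths to a leaf are exactly the words $1^k 0$, and the edge rules send such a path to $(0,c+1-k)$, matching $\mathcal{P}_{1,1-k}=\{1^k0\}$, the infinite all-ones branch carrying no finite prefix. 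For $b\ge 2$ the children have first coordinate $b/2$ or $(b\pm1)/2$, each strictly less than $b$, so the induction closes. Finally I would note two book-keeping points that make the labelling match the statement: the first coordinate is non-increasing along every edge and so stays in $\{0,\dots,a\}$, placing all labels in $\{0,\dots,a\}\times\Set{Z}$; and because the leaves $(0,\cdot)$ have no descendants, the words realized as root-to-leaf paths are pairwise incomparable, in agreement with the disjoint cylinders $[p_j]$ of Lemma~\ref{lemma.prefixes}.
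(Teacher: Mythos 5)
Your construction is the same tree as the paper's: root $(a,0)$, even vertices passing $(b,c)\mapsto(b/2,c)$ along both edge labels, odd vertices splitting to $((b-1)/2,c+1)$ on edge $0$ and $((b+1)/2,c-1)$ on edge $1$ (the paper merges the two even-case children into one vertex with two parallel edges, which is only cosmetic). Where you genuinely diverge is in the verification. The paper argues semantically: it interprets the second coordinate as a running counter for $s_2(x+a)-s_2(x)$ and asserts that following $\underline{x}$ through the tree reproduces the digit-by-digit addition, without a formal induction. You instead prove the invariant $(\ast)$ by strong induction on the first coordinate, transcribing the recursion for $\mathcal{P}_{b,d}$ established in the proof of Lemma~\ref{lemma.prefixes}, with explicit base cases at $b=0$ and $b=1$; this correctly isolates the one place where the recursion does not decrease ($(1,c)\xrightarrow{1}(1,c-1)$) and handles it by the direct computation $\mathcal{P}_{1,1-k}=\{1^k0\}$. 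Your route buys rigor and also quietly repairs a literal defect of the paper's construction: since the paper lets vertices labelled $0$ spawn further children $(0,d)\to(0,d)$, a ``path from $(a,0)$ to a vertex $(0,d)$'' is not unique there, whereas declaring $b=0$ a leaf makes the stated bijection with the finite set $\mathcal{P}_{a,d}$ exact. The paper's approach, in exchange, conveys the arithmetic meaning of the counter (carry propagation), which your purely syntactic induction does not make visible. One tiny caveat: your closing remark about the cylinders $[p_j]$ being disjoint is true of root-to-leaf words but is not actually asserted in Lemma~\ref{lemma.prefixes}; it is harmless here.
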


\begin{proof}

Let us prove this by induction on $a$. First, assume that $a=1$. The tree $\tau_1$ is given by Figure~\ref{f.subtree1}. Vertices labels are a pair, the first coordinates are boxed (in the vertex), the second coordinates of the labels are circled. The edges labels (0 or 1) are indicated above the edges. One can check that the path from the vertex labeled $(1,0)$ (which is the root in this case) to a vertex $(0,d)$ defines a sequence of edges whose labels form a word which is the only word in $\mathcal{P}_{1,d}$. For example, if $s_2(x+1)-s_2(x)=-2$, then $\underline{x}$ indeed begins with the word ``1110''.

\begin{figure}[ht!]\centering
 \includegraphics[scale=0.35]{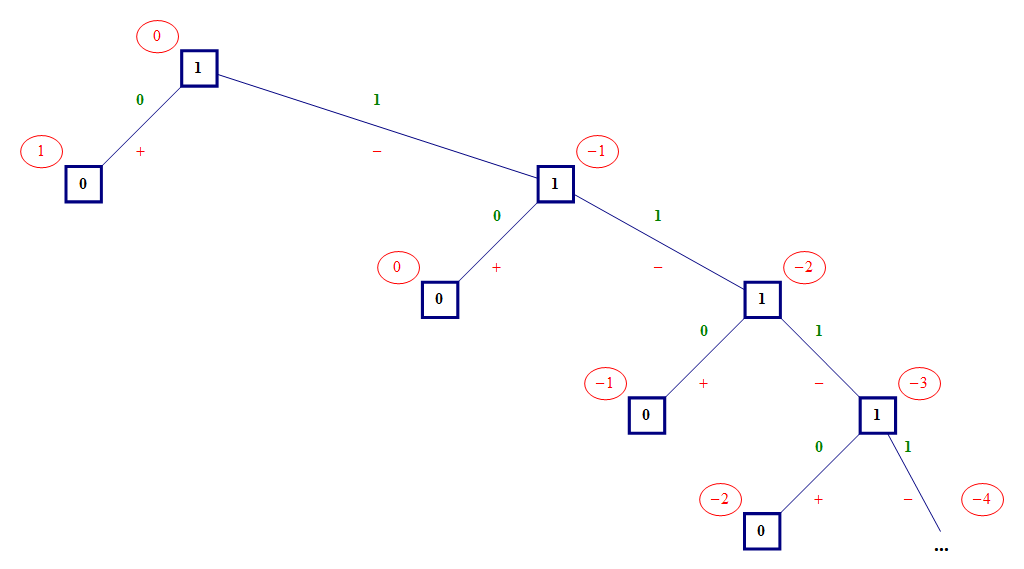}
 \caption{Infinite subtree spanned by every vertex labeled by $1$}
 \label{f.subtree1}
\end{figure}

Let us now assume that every such tree exists until a fixed integer $a$. Let $k$ be an integer no greater than $2a$.

\begin{itemize}
 \item If $k$ is even, we add to the tree $\tau_{k/2}$ a vertex labeled by $(k,0)$ and add two edges between the vertices $(k,0)$ and $(\frac{k}{2},0)$: one labeled by a $0$ and the other one by a $1$. This tree does satisfy the property that the set $\mathcal{P}_{k,d}$ is the set of paths labels from $(k,0)$ to a vertex $(0,d)$. Indeed, 
 $$
\mathcal{P}_{k,d}=\{0w, w\in \mathcal{P}_{k/2,d}\}\cup\{1w, w\in \mathcal{P}_{k/2,d}\},
$$
and starting from vertex $(k,0)$, one can choose either edge (0 or 1) to get to the vertex $(\frac{k}{2},0)$, from which the induction hypothesis holds.
 \item If $k$ is odd, we define $\widetilde{\tau_{\frac{k-1}2}}$ to be a copy of the tree $\tau_{\frac{k-1}2}$ where every vertex labeled $(n,c)$ is relabeled $(n,c+1)$. We also denote by $\widetilde{\tau_{\frac{k+1}2}}$ the copy of $\tau_{\frac{k+1}2}$ where every vertex is relabeled from $(n,c)$ to $(n,c-1)$. The tree $\tau_k$ is obtained by taking the union of  $\widetilde{\tau_{\frac{k-1}2}}$ and $\widetilde{\tau_{\frac{k+1}2}}$, adding a vertex labeled $(k,0)$ and two edges: one labeled $0$ between $(k,0)$ and $(\frac{k-1}2,1)$ and one labeled 1 between $(k,0)$ and $(\frac{k+1}2,-1)$. We recall that 
 $$
 \mathcal{P}_{k,d}=\{0w, w\in \mathcal{P}_{\frac{k-1}{2},d-1}\}\cup\{1w, w\in \mathcal{P}_{\frac{k+1}{2},d+1}\}.
 $$
 Hence using the induction hypothesis on both $\tau_{\frac{k+1}2}$ and $\tau_{\frac{k-1}2}$ yields the result for $\tau_k$.
\end{itemize}

Thus for every integer $k$ there exists a tree $\tau_k$ which satisfies the property described in the lemma.
\end{proof}

If one wishes to construct such a tree for a given positive integer $a$, let us give a method to easily do so (which is a direct consequence of the proof).

Let us choose a particular $a$ in $\mathbb{N}$ and construct the associated tree $\tau_a$. 
The construction starts at level $0$ with only one vertex labeled by $(a,0)$ and we define the tree level by level. If $\tau_a$ is defined up to level $n$ in $\mathbb{N}$, from every vertex we construct the vertices at level $n+1$ in the following way.

Starting from a vertex labeled by $(k,c)$:
\begin{itemize}
 \item if $k$ is even we add a vertex at level $n+1$ labeled by $(\frac{k}{2},c)$ and add one edge of each type between these vertices;
 \item if, however, $k$ is odd, we add two vertices labeled by $(\frac{k-1}{2},c+1)$ and $(\frac{k+1}{2},c-1)$ and we construct an edge of type $0$ between $(k,c)$ and $(\frac{k-1}{2},c+1)$ and an edge of type $1$ between $(k,c)$ and $(\frac{k+1}{2},c-1)$. 
\end{itemize}
 
Notice that the second coordinate of the labels of the vertices can be seen as a counter that keeps track of the difference $s_2(x+a)-s_2(x)$ as the summation is processed digit by digit. More precisely, if we actually compute by hand the sum of $x$ and $a$, the value of the counter on the $n^{th}$ level is exactly the number of $1$'s lost or gained when the $n^{th}$ digit of the sum is computed. Once we reach a vertex $(0,d)$, the summation is completed.

We can see an example of such a construction on Figure~\ref{f.tree5} for $a=5$.

\begin{example}
\begin{figure}[ht!]\centering
 \includegraphics[scale=0.25]{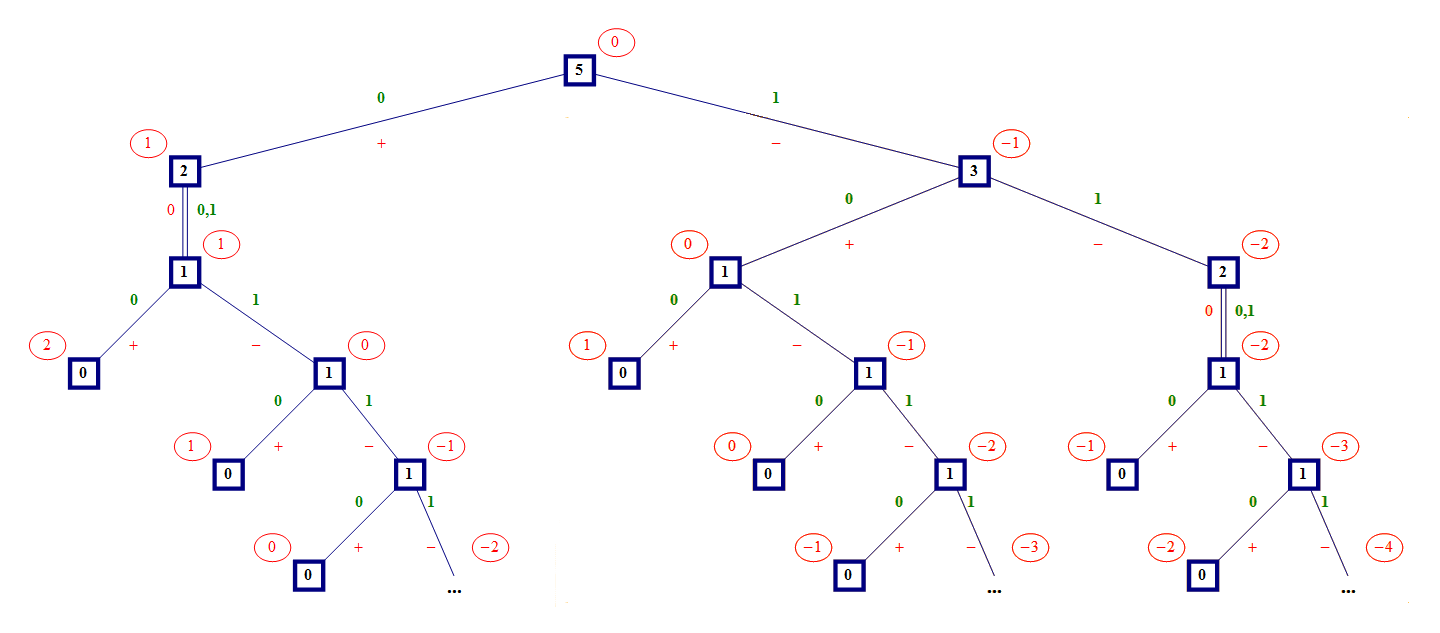}
 \caption{Part of the tree $\tau_5$.}
 \label{f.tree5}
\end{figure}
 Let $a=5$. Then we start our construction on the first level of the tree by creating the root labeled by $(5,0)$. The number $5$ being odd, we add the vertices (2,1) and (3,-1) on the second level, and add an edge of type 0 between vertices (2,1) and (5,0) and an edge of type 1 between vertices (3,-1) and (5,0).
 
 We continue the construction on the third level by:
 \begin{itemize}
  \item adding a child to vertex (2,1) labeled by (1,1), since 2 is even;
  \item adding two children to vertex (3,-1) labeled by vertices (1,0) and (2,-2), since 3 is odd. The edge between vertices (1,0) and (3,-1) is of type 0 and the edge between vertices (2,-2) and (3,-1) is of type 1.
 \end{itemize}

 We continue this process on each vertex at each level to obtain the tree of Figure~\ref{f.tree5}.

%

Let us compute the words in $\mathcal{P}_{5,0}$. It can be read from the tree $\tau_{5}$ that this set is given by the words 00110, 01110, and 1010.

We wish to remark that, because of the way $\underline{x}$ is defined, the word 00110, for example, is not the binary expansion of 6 but of 12, as everything is mirrored.
\end{example}

We now remark that having such a family of trees proves the following proposition.

\begin{proposition}\label{p.proba_rec}
 \it For every $a$ in $\mathbb{N}$ and every $d$ in $\mathbb{Z}$, we have the following identities:
 $$
 \mu_{2a}(d)=\mu_a(d)
 $$
and
 $$
 \mu_{2a+1}(d)=\frac{1}{2}\mu_a(d-1)+\frac{1}{2}\mu_{a+1}(d+1).
 $$
\end{proposition}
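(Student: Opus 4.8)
The plan is to derive both identities directly from the recursive description of the prefix sets $\mathcal{P}_{a,d}$ established in the proof of Lemma \ref{lemma.prefixes}, combined with the formula $\mu_a(d) = \sum_{p \in \mathcal{P}_{a,d}} \Set{P}(p)$ and the elementary fact that prepending a digit halves the Bernoulli weight, $\Set{P}(0w) = \Set{P}(1w) = \tfrac12 \Set{P}(w)$.

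For the first identity I would argue from the tree $\tau_{2a}$. Since $2a$ is even, its root $(2a,0)$ has a single child $(a,0)$, joined by one edge of each type, and the counter is left unchanged; so every path from $(2a,0)$ to $(0,d)$ is a digit in $\{0,1\}$ followed by a path from $(a,0)$ to $(0,d)$. This gives the disjoint union $\mathcal{P}_{2a,d} = \{0w : w \in \mathcal{P}_{a,d}\} \cup \{1w : w \in \mathcal{P}_{a,d}\}$, and summing weights using $\Set{P}(0w) + \Set{P}(1w) = \Set{P}(w)$ yields $\mu_{2a}(d) = \mu_a(d)$.

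For the second identity I would use that $2a+1$ is odd, so the root $(2a+1,0)$ of $\tau_{2a+1}$ has two children: $(a,1)$ via the edge of type $0$ and $(a+1,-1)$ via the edge of type $1$. First I would record a translation invariance of the construction: the subtree rooted at $(k,c)$ coincides, after shifting every second coordinate by $-c$, with the subtree rooted at $(k,0)$, because at each step the counter is modified by an amount depending only on the edge type and the parity of the first coordinate, never on its current value. Consequently the paths from $(a,1)$ to $(0,d)$ are exactly those recorded by $\mathcal{P}_{a,d-1}$, and those from $(a+1,-1)$ to $(0,d)$ are exactly $\mathcal{P}_{a+1,d+1}$. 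This gives $\mathcal{P}_{2a+1,d} = \{0w : w \in \mathcal{P}_{a,d-1}\} \cup \{1w : w \in \mathcal{P}_{a+1,d+1}\}$, and summing weights gives $\mu_{2a+1}(d) = \tfrac12 \mu_a(d-1) + \tfrac12 \mu_{a+1}(d+1)$.

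The measure identities are then a one-line computation once the set identities are in place, so the only delicate point — and the nearest thing to an obstacle — is the translation invariance of the counter used in the odd case; everything else is a direct unwinding of the tree construction and the definition of $\mu_a$. I also note that these same set identities are already implicit in the induction step of Lemma \ref{lemma.prefixes} (with $a+1$ playing the role of $2a+1$ when $a$ is even, and of $2a$ when $a$ is odd), so an alternative and fully rigorous route is simply to reindex those recursions rather than re-reading them off the tree, which avoids invoking the tree lemma at all.
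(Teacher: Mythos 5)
Your proof is correct and follows essentially the same route as the paper: both read the set identities $\mathcal{P}_{2a,d}=\{0w,1w : w\in\mathcal{P}_{a,d}\}$ and $\mathcal{P}_{2a+1,d}=\{0w : w\in\mathcal{P}_{a,d-1}\}\cup\{1w : w\in\mathcal{P}_{a+1,d+1}\}$ off the trees $\tau_{2a}$ and $\tau_{2a+1}$ and then sum the Bernoulli weights. Your explicit justification of the counter's translation invariance is a detail the paper leaves implicit, and is a worthwhile addition rather than a divergence.
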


\begin{proof}
 For every positive integer $a$ and every integer $d$, we can read from the tree $\tau_{2a}$ that words in $\mathcal{P}_{2a,d}$ are exactly words beginning with either $0$ or $1$, then followed by a word in $\mathcal{P}_{a,d}$. Thus we have
  $$
 \mu_{2a}(d)=\mu_a(d).
 $$
 Notice that words in $\mathcal{P}_{2a+1,d}$ are exactly the words beginning with a 0 followed by a word in $\mathcal{P}_{a,d-1}$ and the words beginning with a 1 followed by a word in $\mathcal{P}_{a+1,d+1}$. It follows that
  $$
 \mu_{2a+1}(d)=\frac{1}{2}\mu_a(d-1)+\frac{1}{2}\mu_{a+1}(d+1).
 $$
\end{proof}

\begin{remark}
 This proposition allows us to explicitly compute every distribution $\mu_a$, since it is trivial to give explicit closed formulas for $\mu_0$ and $\mu_1$. It also gives an understanding of the link between $\mu_a$ and the binary expansion of $a$. However, we prefer another presentation of such a result. Namely, one that assembles in a close formula these identities as we follow the binary decomposition of $a$. Such a formula is given in Theorem~\ref{thm.distrib}.
\end{remark}

One way to do such a thing is to `collapse' the tree $\tau_a$ in order to obtain a `collapsed' graph. This is more or less done by identifying the vertices whose label's first coordinates are the same, and we obtain a graph whose structure is studied in the next section.

%
%

Let us define properly this `collapsed' graph for an integer $a$.
\begin{itemize}
 \item The graph has levels indexed on $\mathbb{N}$.
 \item For every $n$ in $\mathbb{N}$, the level $n$ has exactly two vertices labeled by $\lfloor \frac{a}{2^n}\rfloor$ and $\lfloor \frac{a}{2^n}\rfloor+1$.
 \item There are edges only between vertices on consecutive levels and they are described in the following way:
 \begin{itemize}
 \item if $\lfloor\frac{a}{2^n}\rfloor$ is even then there are two edges between $\lfloor\frac{a}{2^n}\rfloor$ and $\lfloor\frac{a}{2^{n+1}}\rfloor$. There is also an edge between $\lfloor \frac{a}{2^n}\rfloor+1$ and $\lfloor \frac{a}{2^{n+1}}\rfloor$, and another between $\lfloor \frac{a}{2^n}\rfloor+1$ and $\lfloor \frac{a}{2^{n+1}}\rfloor+1$;
  \item if $\lfloor\frac{a}{2^n}\rfloor$ is odd then there are two edges between $\lfloor\frac{a}{2^n}\rfloor+1$ and $\lfloor\frac{a}{2^{n+1}}\rfloor+1$. There is also an edge between $\lfloor \frac{a}{2^n}\rfloor$ and $\lfloor \frac{a}{2^{n+1}}\rfloor$, and another between $\lfloor \frac{a}{2^n}\rfloor$ and $\lfloor \frac{a}{2^{n+1}}\rfloor+1$;
 \end{itemize}
\end{itemize}

\begin{figure}[ht!]
\centering
 \includegraphics[scale=0.33]{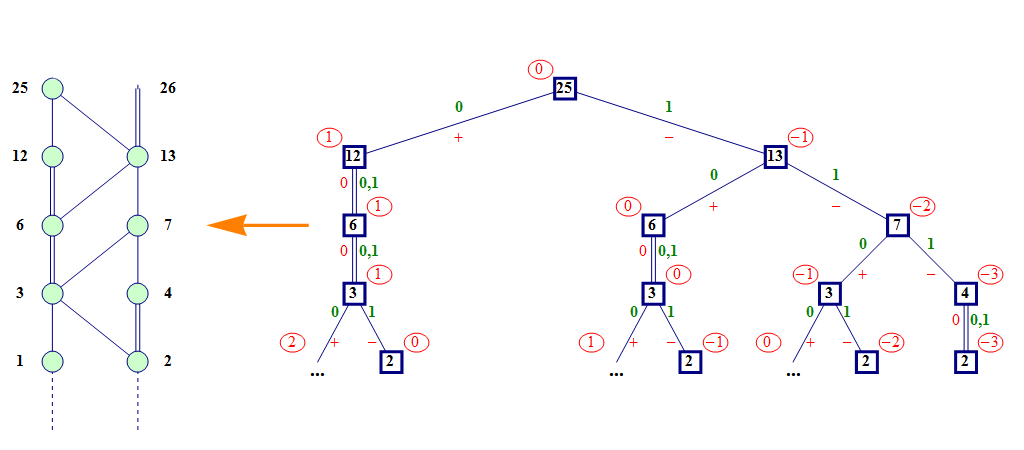}
 \caption{Collapsing $\tau_{25}$.}
 \label{f.collapse}
\end{figure}

\begin{figure}[ht!]
\centering
 \includegraphics[scale=0.31]{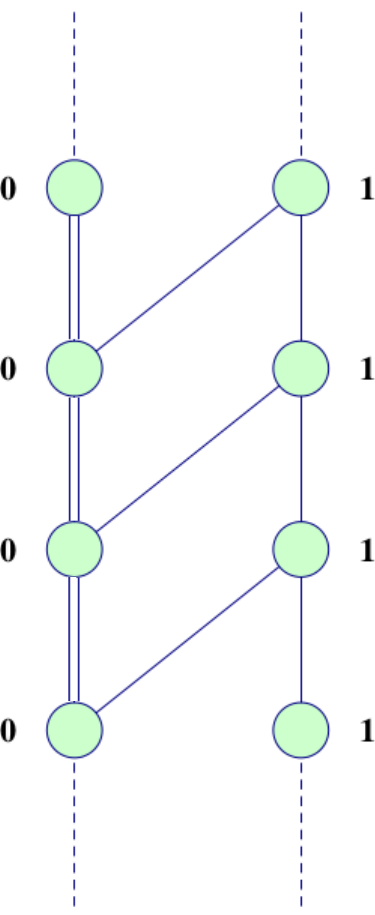}
 \caption{``Tail'' of a collapsed graph.}
 \label{f.tail}
\end{figure}

\begin{remark}
 If one wants to see this graph as actually being a collapsing of the tree by identifying vertices on the same level which have the same labels, one should be careful as there are two ambiguities. First of all, for coherence purposes which will appear latter, we add a vertex $a+1$ on level $0$ which does not appear on the tree. Secondly, notice that for $n$ big enough, a collapsed graph on levels greater than $n$ is given in Figure~\ref{f.tail}. Such a ``tail'' of a collapsed graph is, \textit{a priori}, different from what one would get by identifying vertices on levels greater than $n$ on the tree. Indeed, there should not be edges between vertices labeled by $0$ on the collapsed graph. We deliberately chose to do it this way in order to control the length of the prefixes in $\mathcal{P}_{a,d}$. The reason for such a choice appears in the next subsection.
\end{remark}

Looking at this graph after collapsing is quite interesting since it gives us an understanding of the link between the summation process and the binary expansion of $a$. It also allows the statistical study of the behavior of $s_2(x+a)-s_2(x)$. An example of the tree $\tau_{25}$ collapsing is given in Figure~\ref{f.collapse}.

\subsection{Distribution of $s_2(x+a)-s_2(x)$}

The goal of this section is to prove the following theorem.

\begin{theorem}\label{thm.distrib}
 The distribution $\mu_a$ is calculated via 
an infinite product of matrices whose coefficients are operators of $l^1(\mathbb{Z})$:
$$ \mu_a = ( Id,\; Id ) \;  
    \cdots A_{a_n} A_{a_{n-1}} \cdots A_{a_1} A_{a_0} 
    \begin{pmatrix} \delta_0 \\ 0 \end{pmatrix},
$$
where the sequence $(a_n)_{n \in \mathbb{N}}$ is the binary expansion of $a$,  $\delta_0$ is the Dirac mass in $0$, the $A_i$ are defined by
$$
    A_0 = \begin{pmatrix} Id & \frac12 {S^{-1}} \\ 0 & \frac12 {S} \end{pmatrix}, \qquad 
    A_1 = \begin{pmatrix} \frac12 {S^{-1}} & 0 \\ \frac12 {S} & Id \end{pmatrix}, 
$$
and $S$ is the left shift transformation on $l^1(\mathbb{Z})$. 
\end{theorem}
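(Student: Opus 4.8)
The plan is to realise the column vector appearing in the product as a pair of partial, carry-conditioned distributions, and to read off the matrices $A_0,A_1$ as the transfer operator of the addition performed bit by bit, starting from the least significant digit; this is exactly the collapsed tree described above. Fix $a$ and, for $k\in\Set{N}$, consider only the lowest $k$ binary digits of $x$, i.e. $x\bmod 2^k$, drawn uniformly among the $2^k$ possibilities. Writing $(x\bmod 2^k)+(a\bmod 2^k)=r+c\,2^k$ with $r<2^k$ and carry $c\in\{0,1\}$, define two elements of $l^1(\Set{Z})$,
\[
 p_k=2^{-k}\sum_{\substack{0\le x<2^k\\ c=0}}\delta_{s_2(r)-s_2(x\bmod 2^k)},\qquad
 q_k=2^{-k}\sum_{\substack{0\le x<2^k\\ c=1}}\delta_{s_2(r)-s_2(x\bmod 2^k)},
\]
so that $p_k$ (resp. $q_k$) records the partial value of $s_2(x+a)-s_2(x)$ produced by the lowest $k$ bits in the case of no outgoing carry (resp. an outgoing carry). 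Set $V_k=\begin{pmatrix}p_k\\ q_k\end{pmatrix}$. With no bits read there is no carry and the partial difference is $0$, so $V_0=\begin{pmatrix}\delta_0\\0\end{pmatrix}$, which is the initial vector of the product.

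First I would establish the one-step recursion $V_{k+1}=A_{a_k}V_k$. Appending the $(k+1)$-st bit amounts to adding, at position $k$, the uniform digit $x_k\in\{0,1\}$, the digit $a_k$ and the incoming carry $c\in\{0,1\}$; the output digit is $(x_k+a_k+c)\bmod 2$, the new carry is $\lfloor(x_k+a_k+c)/2\rfloor$, and the counter $s_2(x+a)-s_2(x)$ is incremented by the difference of output and input digits. Running through the four, resp. four, cases according to $a_k=0$ or $a_k=1$ and to the incoming carry, and recording for each case whether the new carry is $0$ or $1$ together with the increment $+1,0$ or $-1$, reassembles precisely into
\[
 A_0=\begin{pmatrix} Id & \tfrac12 S^{-1}\\ 0 & \tfrac12 S\end{pmatrix},\qquad
 A_1=\begin{pmatrix} \tfrac12 S^{-1} & 0\\ \tfrac12 S & Id\end{pmatrix},
\]
where the factor $\tfrac12$ is the probability of the new bit $x_k$, the shift $S^{-1}$ (resp. $S$) encodes an increment $+1$ (resp. $-1$) of the counter, and $S$ is the left shift on $l^1(\Set{Z})$. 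By induction $V_k=A_{a_{k-1}}\cdots A_{a_0}\begin{pmatrix}\delta_0\\0\end{pmatrix}$ for every $k$.

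It then remains to pass to the limit $k\to\infty$ and to sum the two coordinates. Once $k$ exceeds the length of $\underline a$ every further digit is $0$, so for large $k$ one only applies $A_0$; its action gives $q_{k+1}=\tfrac12 S q_k$ and $p_{k+1}=p_k+\tfrac12 S^{-1}q_k$. Hence $\|q_k\|_1$ halves at each step and $q_k\to 0$ in $l^1(\Set{Z})$, while $p_k$ is increasing with $\sum_k\|\tfrac12 S^{-1}q_k\|_1<\infty$, so $p_k$ converges in $l^1(\Set{Z})$; this is the meaning of the infinite product $\cdots A_0A_0\,A_{a_n}\cdots A_{a_0}$, the leading factors $A_0$ accounting for the leading zeros of $a$. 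Finally, for $\Set{P}$-almost every configuration the carry stops propagating after finitely many steps, so the partial counter stabilises at $s_2(x+a)-s_2(x)$; therefore $p_k+q_k$ converges to the genuine distribution $\mu_a$ of Lemma \ref{lemma.prefixes}, and since $q_k\to0$ we get $p_k\to\mu_a$. Applying the row $(Id,Id)$, that is summing the two coordinates of $\lim_k V_k$, yields $\mu_a=\lim_k(p_k+q_k)$, which is the claimed formula.

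The main obstacle is this last paragraph: one must justify the convergence of the infinite operator product and, above all, identify its limit with the true measure $\mu_a$. The delicate point is the interchange of the two limits --- the density limit defining $\mu_a$ and the limit in $k$ of the carry-truncated distributions --- which requires controlling the tail probability that a carry still propagates beyond bit $k$; the estimate $\|q_k\|_1\to 0$ coming from the repeated action of $A_0$ is exactly what closes this gap, while boundedness of $A_0,A_1$ on $l^1(\Set{Z})\oplus l^1(\Set{Z})$ guarantees that the products are well defined.
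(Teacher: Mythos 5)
Your proof is correct and follows essentially the same route as the paper: your carry-conditioned pair $(p_k,q_k)$ is exactly the paper's pair $(\eta^1_{a,k},\eta^2_{a,k})$ read off the collapsed summation graph (the two vertices at level $k$ being precisely the ``no carry'' and ``carry'' states), and your one-step recursion $V_{k+1}=A_{a_k}V_k$ is the paper's key lemma. The only minor divergence is in the tail: the paper identifies $\mu_a(d)$ with $\eta^1_{a,m_d}(d)$ exactly at a finite level using the finiteness of the prefix sets $\mathcal{P}_{a,d}$, whereas you argue via $\|q_k\|_1\to 0$ and $l^1$-convergence of $p_k$; both close the argument.
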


This theorem follows from observations on the collapsed graph. First, notice that only two different patterns can appear between any two levels of the collapsed graph. We add labels on the edges of the collapsed graph in a way consistent with the tree $\tau_a$. Those patterns with the labels on edges are given by Figure~\ref{f.cells}. Notice also that the order in which the patterns appear is given by the binary expansion of $a$.  

Now, we remark that we can identify words in $\{0,1\}^*$ with paths in the collapsed graph whose starting point is the vertex labeled by $a$ on level $0$.

\begin{definition}
 We define the functions  $E:\{0,1\}^*\rightarrow \{1,2\}$ and $C:\{0,1\}^*\rightarrow \mathbb{Z}$ in the following way:
 \begin{itemize}
  \item $E(w)=1$ if the path associated to $w$ has endpoint the vertex of smaller label of its level (the leftmost vertex on Figure~\ref{f.cells});
  \item $E(w)=2$ if the path associated to $w$ has endpoint the vertex of greater label of its level (the rightmost vertex on Figure~\ref{f.cells});
  \item $C(w)$ is the ``counter'' of the path associated to $w$ (in the same way as for the tree defined in the previous subsection). The counter of the empty word is $0$ and the ``$+$'' and ``$-$'' on the edges of the graphs in Figure~\ref{f.cells} indicate whether to increment or decrement the counter. In other words, $C(w)$ is the number of edges of type ``$+$'' that the path associated to $w$ contains minus the number of edges of type ``$-$''.
 \end{itemize}

\end{definition}

Notice that the way the counter is defined is consistent with the counter on the tree $\tau_a$ for every integer $a$.


\begin{definition}

 Let us define, for each positive integer $a$, each level $n$ of the collapsed graph the probability measures $\eta_{a,n}^1$ and $\eta_{a,n}^2$ on every $d$ in $\mathbb{Z}$:
 
 $$
\eta_{a,n}^1(d)=\frac{1}{2^n}\#\{w \in \{0,1\}^n\ | \ E(w)=1 \ \text{and}\ C(w)=d\},
 $$
 
  $$
\eta_{a,n}^2(d)=\frac{1}{2^n}\#\{w \in \{0,1\}^n\ | \ E(w)=2 \ \text{and}\ C(w)=d\}
 $$
 
\end{definition}

This seems to only define a sequence in $l^1(\mathbb{Z})$, but we recall that we can identify finite measures on $\mathbb{Z}$ with sequences in $l^1(\mathbb{Z})$. It is a quick check to verify that we actually have  defined  probability measures.

Notice also that, for every $d$ in $\mathbb{Z}$, $\eta_{a,n}^1(d)$ is the asymptotic density of the set of integers whose binary expansion starts with a word in $\{w \in \{0,1\}^n\ | \ E(w)=1 \ \text{and}\ C(w)=d\}$.

\begin{figure}[ht]
\centering
\begin{tabular}{cccc}
 \includegraphics[scale=0.8]{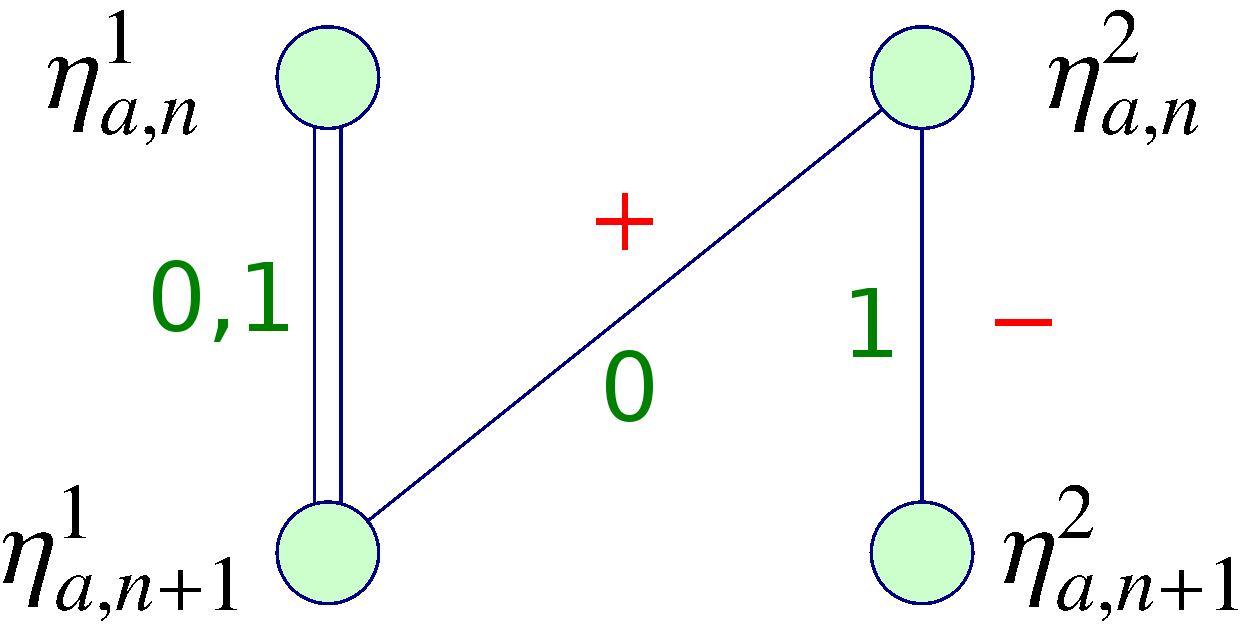}&&&
  \includegraphics[scale=0.8]{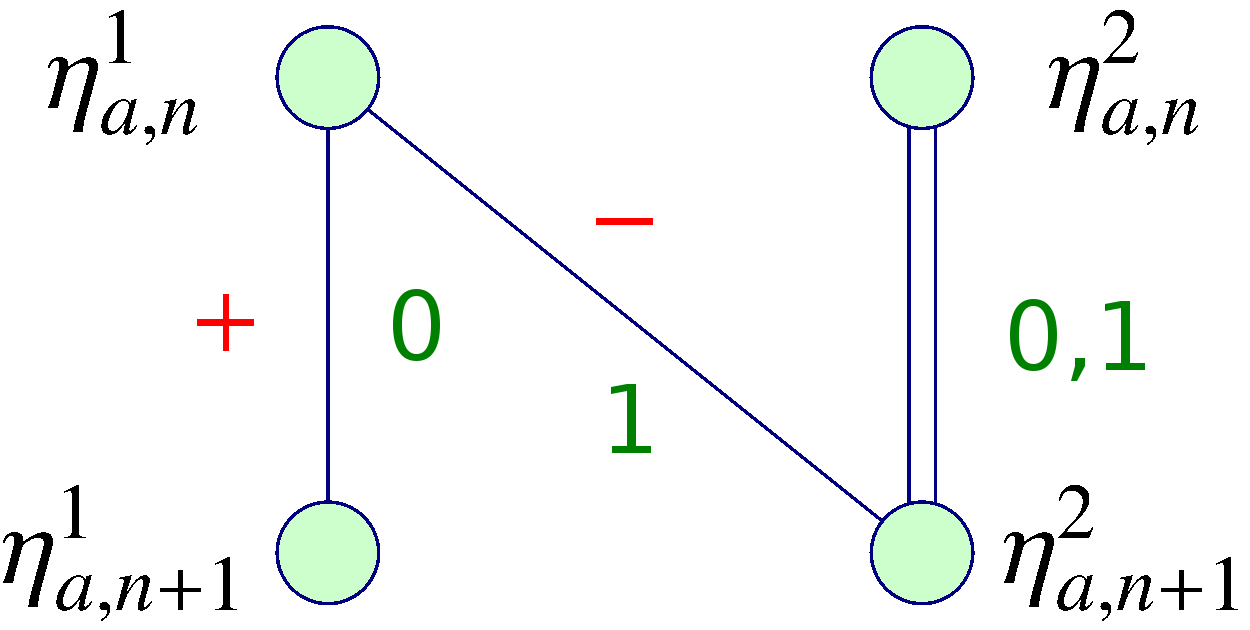}\\
\end{tabular}
 \caption{The two different patterns encountered in the collapsed graph}
 \label{f.cells}
\end{figure}

\begin{lemma}\label{l.eta_induction}
 The $\eta_{a,n}^i$ are given by induction:
 $$
 \begin{pmatrix} \eta_{a,n+1}^1 \\ \eta_{a,n+1}^2 \end{pmatrix} = A_{a_n} \begin{pmatrix} \eta_{a,n}^1 \\ \eta_{a,n}^2 \end{pmatrix}
 $$
 where 
 $$
 a=\sum_{k=0}^{+\infty} a_k  2^k.
 $$
\end{lemma}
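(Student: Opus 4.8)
The plan is to establish the recursion by analysing a single step of the collapsed graph and checking that each of the two cells of Figure~\ref{f.cells} reproduces exactly one of the matrices $A_0$, $A_1$. The first step is to pin down the labels of the two vertices at each level: I would show by induction on $n$ that the first coordinates of the vertices at level~$n$ are $\lfloor a/2^n\rfloor$ and $\lfloor a/2^n\rfloor+1$, with the convention that the vertex indexed $i=1$ carries the smaller label. The base case is the root $(a,0)$ together with the auxiliary vertex $a+1$ noted in the remark, and the inductive step is immediate from the construction rules of $\tau_a$: halving an even label, and splitting an odd label into its two halves, always collapses the resulting children onto the consecutive pair $\lfloor a/2^{n+1}\rfloor$, $\lfloor a/2^{n+1}\rfloor+1$. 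The point of this is that the parity of $\lfloor a/2^n\rfloor$ equals the digit $a_n$, so the cell occurring between levels $n$ and $n+1$ is dictated precisely by the $n$-th binary digit of $a$.

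With the labels in hand I would treat the two digits separately. When $a_n=0$ the smaller vertex is even and the larger is odd, so from vertex~$1$ both edges lead (with total probability one) to vertex~$1$ at level $n+1$ leaving the counter unchanged, whereas from vertex~$2$ the type-$0$ edge raises the counter by $1$ and lands on vertex~$1$ and the type-$1$ edge lowers it by $1$ and lands on vertex~$2$, each with probability $\frac12$. Collecting the contributions value by value gives
$$
\eta_{a,n+1}^1(d)=\eta_{a,n}^1(d)+\frac12\,\eta_{a,n}^2(d-1),
\qquad
\eta_{a,n+1}^2(d)=\frac12\,\eta_{a,n}^2(d+1),
$$
which is exactly the action of $A_0$. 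The case $a_n=1$ is the mirror situation, the smaller vertex now being odd and the larger even; the same bookkeeping yields
$$
\eta_{a,n+1}^1(d)=\frac12\,\eta_{a,n}^1(d-1),
\qquad
\eta_{a,n+1}^2(d)=\frac12\,\eta_{a,n}^1(d+1)+\eta_{a,n}^2(d),
$$
i.e.\ the action of $A_1$.

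The only delicate point — and the step I expect to be the main obstacle — is matching the counter increments with the correct shift operators. One has to fix once and for all that $S$ is the left shift, so that $(S\nu)(d)=\nu(d+1)$; under this convention $S$ sends the law of a counter $X$ to the law of $X-1$ and $S^{-1}$ to that of $X+1$, so that the ``$+1$'' edges contribute through $S^{-1}$ and the ``$-1$'' edges through $S$. With this identification the two displayed systems are precisely the two rows of $A_{a_n}\begin{pmatrix}\eta_{a,n}^1\\ \eta_{a,n}^2\end{pmatrix}$, and since $n$ was arbitrary this proves the stated one-step recursion for every $n\in\Set{N}$.
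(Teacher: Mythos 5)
Your proof is correct and follows essentially the same route as the paper: a case analysis on the digit $a_n$ determining which of the two cells occurs between levels $n$ and $n+1$, yielding exactly the coordinatewise recursions that encode $A_0$ and $A_1$. The only difference is that you explicitly verify (by induction on the level) that the two vertices at level $n$ are labelled $\lfloor a/2^n\rfloor$ and $\lfloor a/2^n\rfloor+1$ and that the cell type is governed by the parity of $\lfloor a/2^n\rfloor$, a point the paper treats as obvious and relegates to a remark; this is a welcome addition of rigor rather than a different argument.
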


\begin{proof}

We wish to compute $\eta_{a,n+1}^1$ and $\eta_{a,n+1}^2$. It is obvious that the pattern we see between levels $n$ and $n+1$ is given by $a_n$.
 
 If $a_n=0$ then we encounter the left pattern of Figure~\ref{f.cells}. In this case we have
 $$
 \eta_{a,n+1}^1(d)=\eta_{a,n}^1(d)+ \frac{1}{2}\eta_{a,n}^2(d-1).
 $$
 This being true for all $d$, we can write
 $$
 \eta_{a,n+1}^1=\eta_{a,n}^1+ \frac{1}{2} S^{-1}(\eta_{a,n}^2),
 $$
 where $S$ is the left shift transformation on the space $l^1(\mathbb{Z})$.
 For the same pattern we also have
 $$
 \eta_{a,n+1}^2(d)=\frac{1}{2}\eta_{a,n}^2(d+1)
 $$
 which can be rewritten as
 $$
 \eta_{a,n+1}^2=\frac{1}{2} S(\eta_{a,n}^2).
 $$
 So in this particular case we can write the relations in the following way:
 $$
 \begin{pmatrix} \eta_{a,n+1}^1 \\ \eta_{a,n+1}^2 \end{pmatrix} = \begin{pmatrix} Id & \frac12 {S^{-1}} \\ 0 & \frac12 {S} \end{pmatrix} \begin{pmatrix} \eta_{a,n}^1 \\ \eta_{a,n}^2 \end{pmatrix}.
 $$
 
 The same arguments for the pattern on the right of figure~\ref{f.cells} (which corresponds to the case where $a_n=1$) yields
 $$
 \begin{pmatrix} \eta_{a,n+1}^1 \\ \eta_{a,n+1}^2 \end{pmatrix} = \begin{pmatrix} \frac12 {S^{-1}} & 0 \\ \frac12 {S} & Id \end{pmatrix} \begin{pmatrix} \eta_{a,n}^1 \\ \eta_{a,n}^2 \end{pmatrix}.
 $$
\end{proof}

Lemma~\ref{l.eta_induction} allows us to prove Theorem~\ref{thm.distrib}:

\begin{proof}

We recall that for every $a$ in $\mathbb{N}$ and every $d$ in $\mathbb{Z}$, a set $\mathcal{P}_{a,d}$ is given by paths in the tree $\tau_a$ whose start is the root labeled by $a$ and whose end is a vertex labeled by $(0,d)$. Let $\mathcal{P}_{a,d}=\{p_1,...,p_{j}\}$ be this set of words, with $p_j$ being the prefix of maximal length.

Since we defined the `collapsed' graph in a way that is consistent with the tree $\tau_a$ (namely, the edges have the same labels), for every $i$ in $ \llbracket1,j\rrbracket$, the word $p_i$ represents a path starting from $a$ and ending on a vertex labeled by $0$ in the collapsed graph and satisfying $E(p_i)=1$ and $C(p_i)=d$.

We denote by $m_d$ the length of $p_j$, and consider the set of prefixes $\{p'_1,...,p'_k\} \subset \{0,1\}^{m_d}$ such that
$$
\bigcup_{i=1}^j [p_i]= \bigcup_{i=1}^k [p'_k].
$$
Notice that every element of  $\{p'_1,...,p'_k\}$ has a prefix that is an element of $\{p_1,...,p_{j}\}$.  Hence, for every $i$ in $\llbracket1,k\rrbracket$, we have $E(p'_i)=1$ and $C(p'_i)=d$ and thus:
$$
\{p'_1,...,p'_k\}\subseteq\{w \in \{0,1\}^{m_d}\ | \ E(w)=1 \ \text{and}\ C(w)=d\}
$$

Let us prove the reciprocal inclusion. Let $w$  be a word in $ \{0,1\}^{m_d}$ such that $E(w)=1$ and $C(w)=d$. Since $E(w)=1$, and since $w$ is of length $m_d$, its ending vertex is labeled by a $0$ (on the collapsed graph). Let us denote by $p$ the minimal prefix of $w$ such that the ending vertex of $p$ is a $0$. Then necessarily, $C(p)=d$. Now notice that such a path $p$ is also a path in $\tau_a$ with ending vertex labeled by $(0,d)$. Then $p$ is in $\{p_1,...,p_{j}\}$. So $w$ is in $\{p'_1,...,p'_k\}$. Consequently, 
$$
\{p'_1,...,p'_k\}=\{w \in \{0,1\}^{m_d}\ | \ E(w)=1 \ \text{and}\ C(w)=d\}
$$
and thus, by Remark~\ref{r.prefixes},
$$
\eta_{a,m_d}^1(d)=\mu_a(d).
$$

Finally, we have the following:
 $$
 \mu_a(d) = \left((Id,Id) A_{a_{m_d}} A_{a_{m_d-1}} \cdots A_{a_1} A_{a_0} \begin{pmatrix} \eta_{a,0}^1 \\ \eta_{a,0}^2 \end{pmatrix}\right)(d).
 $$
 
 It is also worth noting that, after rank $m_d$, continuing to multiply on the left by $A_0$ does not change the final result, so this allows for a more convenient expression:
 
 $$
 \mu_a(d) = \left((Id,Id)\cdots A_{a_{n}} A_{a_{n-1}} \cdots A_{a_1} A_{a_0} \begin{pmatrix} \eta_{a,0}^1 \\ \eta_{a,0}^2 \end{pmatrix}\right)(d).
 $$
 This proves the theorem.
\end{proof}

\begin{exmp}
 Let us show that Theorem~\ref{thm.distrib} allows us to find the expression of $\mu_1$ that was computed in the introduction. According to the theorem,
 $$
 \mu_1=(Id,Id) \cdots A_0 \cdots A_0 A_1 \begin{pmatrix}
                                          \delta_0\\
                                          0
                                         \end{pmatrix}.
 $$
 Now $A_1 \begin{pmatrix}
          \delta_0\\
          0
         \end{pmatrix} = \begin{pmatrix}
			  \frac12 \delta_1\\
			  \frac12 \delta_{-1}
			 \end{pmatrix}$ and it can  easily be seen that, for every integer $k$, $$A_0^k \begin{pmatrix}
			  \frac12 \delta_1\\
			  \frac12 \delta_{-1}
			 \end{pmatrix}=\begin{pmatrix}
			  \sum_{j=-1}^k \left(\frac12\right)^{j+2}\delta_{-j}\\
			  (\frac12)^k \delta_{-k-2}
			 \end{pmatrix}$$
whence

$$
\mu_a=\sum_{j=-1}^{\infty}\frac1{2^{j+2}} \delta_{-j}.
$$

Now, for a more involved example, let us assume we wish to compute $\mu_3(0)$.
 
 Well,
 $$
 \mu_3=(Id,Id) \cdots A_0 \cdots A_0 A_1 A_1 \begin{pmatrix}
                                          \delta_0\\
                                          0
                                         \end{pmatrix},
 $$
 then a quick computation yields
 $$A_1 A_1 \begin{pmatrix}
          \delta_0\\
          0
         \end{pmatrix} = \begin{pmatrix}
			  \frac14 \delta_2\\
			  \frac14 \delta_0 + \frac12 \delta_{-1}
			 \end{pmatrix}
 $$
 and so
 $$A_0 A_1 A_1 \begin{pmatrix}
          \delta_0\\
          0
         \end{pmatrix} = \begin{pmatrix}
			  \frac14 \delta_2 + \frac18 \delta_1 + \frac14 \delta_0\\
			  \frac18 \delta_{-1} + \frac14 \delta_{-2}
			 \end{pmatrix}.
 $$
 Now in order to know $\mu_3(0)$, we have to multiply by $A_0$ once more: 
 $$A_0 A_0 A_1 A_1 \begin{pmatrix}
          \delta_0\\
          0
         \end{pmatrix} = \begin{pmatrix}
			  \frac14 \delta_2 + \frac18 \delta_1 + \frac{5}{16} \delta_0+\frac18 \delta_{-1}\\
			  \frac1{-16} \delta_{-2} + \frac18 \delta_{-3}
			 \end{pmatrix}.
 $$
 Note that multiplying by $A_0$ again will never change the value of the coefficient of $\delta_0$ in the first coordinate, neither will $\delta_0$ appear in the second coordinate (since we apply $\frac12 S$ to the second coordinate when multiplying by $A_0$). Hence $\mu_3(0)=\frac{5}{16}$.
 
 In the general case, for given $a$ in $\mathbb{N}$ and $d$ in $\mathbb{Z}$, we should first compute $A_{a_n}\cdots A_{a_0} \begin{pmatrix}
          \delta_0\\
          0
         \end{pmatrix}$, which gives a vector whose coordinates are linear combinations of Dirac measures. Then we should multiply by $A_0$ until the Dirac measures on the second coordinate have mass only on integers no greater than $d-2$, which ensures that the coefficient of $\delta_d$ in the first coordinate is exactly $\mu_a(d)$.
\end{exmp}

\section{Asymptotic properties of distributions $\mu_a$}

In this section we study the asymptotic behavior of the family of distributions $\mu_a$ as $l(a)$ increases.
We prove that the $l^2(\mathbb{Z})$ norm of $\mu_a$ tends to $0$ as $l(a)$ increases, and  hence the densities of~$\mu_a$ tend to zero. 

Let us recall that $l(a)$ denotes the number of distinct patterns $01$ in the binary expansion of~$a$.
\begin{remark}
For every integer $a$ there exists $k$ integers $p_1,...,p_k$ such that $\underline{a}=0^{p_1}1^{p_2}...1^{p_k}$ or $\underline{a}=1^{p_1}0^{p_2}...1^{p_k}$ depending on the parity of $a$. If $a$ is even, $k=2l(a)$, and else, $k=2l(a)+1$.

In other words, the number of distinct continuous maximal blocks of digits in $\underline{a}$ is either $2l(a)+1$ or $2l(a)$.
\end{remark}

\subsection{Convergence to zero} 

We start by proving the following theorem.
%
\begin{theorem}\label{th.asympt}
There exists a constant $C_0$ such that for every integer $a$ we have the following:
$$
  \|\mu_a\|_2 \le C_0 \cdot l(a)^{-1/4}.
$$
\end{theorem}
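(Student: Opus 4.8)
The plan is to pass to the Fourier transform and invoke Plancherel's theorem. Identifying $\mu_a \in l^1(\Set{Z})\cap l^2(\Set{Z})$ with its Fourier series $\hat\mu_a(\theta)=\sum_{d\in\Set{Z}}\mu_a(d)e^{-id\theta}$, Plancherel gives
$$
\|\mu_a\|_2^2=\frac{1}{2\pi}\int_{-\pi}^{\pi}|\hat\mu_a(\theta)|^2\,d\theta,
$$
so the whole statement reduces to a pointwise bound on $|\hat\mu_a(\theta)|$. Under the Fourier transform the left shift $S$ becomes multiplication by $e^{i\theta}$ and $S^{-1}$ multiplication by $e^{-i\theta}$, so Theorem \ref{thm.distrib} turns the operator matrices into the numerical $2\times 2$ matrices
$$
\hat A_0(\theta)=\begin{pmatrix} 1 & \bar z \\ 0 & z \end{pmatrix},\qquad
\hat A_1(\theta)=\begin{pmatrix} \bar z & 0 \\ z & 1 \end{pmatrix},\qquad z:=\tfrac12 e^{i\theta},
$$
and $\hat\mu_a(\theta)=(1,1)\,\cdots\hat A_{a_n}(\theta)\cdots\hat A_{a_0}(\theta)\begin{pmatrix}1\\0\end{pmatrix}$.

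Next I would exploit the block structure of $\underline a$. First I record two easy facts in the sum-norm $N\!\begin{pmatrix}u\\v\end{pmatrix}=|u|+|v|$ on $\Set{C}^2$: both $\hat A_0(\theta)$ and $\hat A_1(\theta)$ are non-expanding (this reflects that $\mu_a$ is a probability measure and already gives $|\hat\mu_a(\theta)|\le 1$), while the left eigenvector $(1,1)$ at $\theta=0$ yields $\hat\mu_a(0)=1$. The decay must therefore come entirely from the pattern $01$. The central computation is the transfer matrix across one such pattern, $B(\theta):=\hat A_1(\theta)\hat A_0(\theta)$, for which a direct calculation gives $\det B(\theta)=1/4$ and $\operatorname{tr}B(\theta)=\cos\theta+1/4$. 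Hence its eigenvalues have product $1/4$ and sum $\cos\theta+1/4$, so its spectral radius $\rho(\theta)$ satisfies $\rho(\theta)<1$ for $0<|\theta|\le\pi$, while near $0$ one finds $\rho(\theta)=1-\tfrac23\theta^2+O(\theta^4)$. More generally the transfer matrix $\hat A_1^{p}\hat A_0^{q}$ of a maximal $0$-run followed by a maximal $1$-run has determinant of modulus $2^{-(p+q)}$ and contracts at least as strongly.

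The key lemma I would then prove is a uniform contraction estimate: there is a continuous function $g$ with $g(\theta)<1$ for $0<|\theta|\le\pi$ and $g(\theta)\le 1-c\theta^2$ near $0$, together with a norm $\|\cdot\|_\theta$ on $\Set{C}^2$ uniformly equivalent to the Euclidean one, in which every $\hat A_{a_i}(\theta)$ is non-expanding and the transfer across each occurrence of $01$ in $\underline a$ contracts by at most $g(\theta)$. Since the number of such occurrences is exactly $l(a)$ by the definition of $l(a)$, and the non-expanding property absorbs the partial runs at the two ends, this yields $|\hat\mu_a(\theta)|\le C\,g(\theta)^{l(a)}$ with $C$ absolute. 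Inserting this into Plancherel and splitting the integral at some fixed $\delta>0$ gives
$$
\|\mu_a\|_2^2\le \frac{C^2}{2\pi}\left(\int_{|\theta|<\delta} e^{-2c\,l(a)\,\theta^2}\,d\theta+2\pi\,\rho_0^{\,2l(a)}\right)=O\!\left(l(a)^{-1/2}\right),
$$
where $\rho_0=\max_{|\theta|\ge\delta}g(\theta)<1$; taking square roots produces the claimed $\|\mu_a\|_2\le C_0\,l(a)^{-1/4}$, the Gaussian integral accounting precisely for the exponent $-1/4$.

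I expect the main obstacle to be the uniform contraction lemma. The spectral radius of $B(\theta)$ only controls a single transfer matrix, whereas $\hat\mu_a$ is a product of \emph{different} transfer matrices $\hat A_1^{p_i}\hat A_0^{q_i}$ with varying run lengths, so $\prod_i\rho(B_i)$ does not a priori bound the norm of the product: the contraction at one $01$ junction can be defeated by an unfavourable alignment of the phases of the two coordinates. Resolving this is the heart of the argument, and I would handle it by constructing a single $\theta$-adapted norm (equivalently, an invariant cone in the spirit of Birkhoff) in which every $\hat A_0(\theta),\hat A_1(\theta)$ is non-expanding and every period strictly contracts, so that the equivalence constant with the Euclidean norm is paid only once rather than once per block. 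The quadratic vanishing $1-g(\theta)\asymp\theta^2$ near $\theta=0$ must moreover be controlled uniformly in the run lengths, since it is exactly this rate that, after integration, yields $l(a)^{-1/2}$ and hence the exponent $1/4$.
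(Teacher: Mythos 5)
Your overall architecture coincides with the paper's: pass to the Fourier transform, reduce $\|\mu_a\|_2$ via Plancherel to a pointwise bound of the form $|\hat\mu_a(\theta)|\le C\,g(\theta)^{c\,l(a)}$ with $1-g(\theta)\asymp\theta^2$ near $0$, and let the resulting Gaussian integral produce the exponent $-1/4$. Your preliminary computations (non-expansiveness of $\hat A_0(\theta),\hat A_1(\theta)$ in the sum-norm, $\det B(\theta)=\tfrac14$, $\operatorname{tr}B(\theta)=\cos\theta+\tfrac14$, the expansion $\rho(\theta)=1-\tfrac23\theta^2+O(\theta^4)$) are all correct. The gap is that the one step carrying the entire analytic content --- your ``uniform contraction lemma'' asserting a $\theta$-adapted norm in which every $\hat A_{a_i}(\theta)$ is non-expanding and every $01$-junction contracts by $g(\theta)\le 1-c\theta^2$ \emph{uniformly in the run lengths} --- is only announced, not proved. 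You correctly diagnose why the spectral radius of a single $B(\theta)$ does not bound a product of different transfer matrices, but the proposed cure (an invariant cone or Birkhoff-type construction) is precisely what remains to be supplied, and the claim that $\hat A_1^p\hat A_0^q$ ``contracts at least as strongly'' is asserted without justification. As written, the argument establishes only $|\hat\mu_a(\theta)|\le 1$.

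The paper closes exactly this gap without constructing any adapted norm. It works throughout in the standard column-sum operator norm $\|Y\|_1=\max_j\sum_i|y_{ij}|$ (submultiplicative, with $\|\hat A_0(\theta)\|_1=\|\hat A_1(\theta)\|_1=1$), and proves by direct computation that the three-factor sandwich satisfies $\|\hat A_0(\theta)\hat A_1(\theta)^k\hat A_0(\theta)\|_1\le\|\hat A_0(\theta)\hat A_1(\theta)\hat A_0(\theta)\|_1=\phi(\theta)=\frac{1+\sqrt{5+4\cos\theta}}{4}\le e^{-\theta^2/15}$ for every $k\ge 1$; this single inequality is what makes the contraction uniform over run lengths. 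Writing $\underline{a}$ as $l(a)$ blocks of $1$'s separated by $0$'s and applying the sandwich bound to every \emph{other} block (so that each application has its own two flanking zeros, since adjacent blocks may share only one zero), submultiplicativity yields $\|\cdots\hat A_{a_n}(\theta)\cdots\hat A_{a_0}(\theta)\|_1\le\phi(\theta)^{(l(a)-1)/2}$, and the rest is your Plancherel computation. So either carry out your adapted-norm construction with the quadratic rate verified uniformly in the block lengths, or replace it by this explicit norm estimate, which is the cheaper path.
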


The proof is established in a series of lemmas. Let us recall the notation 
$$
    A_0 = \begin{pmatrix} Id & \frac12 {S^{-1}} \\ 0 & \frac12 {S} \end{pmatrix}, \qquad 
    A_1 = \begin{pmatrix} \frac12 {S^{-1}} & 0 \\ \frac12 {S} & Id \end{pmatrix}, 
$$
where $S$ is the left shift on~$l^1(\mathbb{Z})$. 
Let us define a norm on the space of $n \times n$ matrices $Y = (y_{i,j})_{i,j \in \{1,...,n\}^2}$ by the following:
$$
  \|Y\|_1 = \max_{1 \le j \le n} \sum_{i=1}^n |y_{i,j}|. 
$$

\begin{remark}
 We remark right away that this defines a submultiplicative norm.
\end{remark}

We consider the Fourier transform $\Hat\mu_a$ defined for every $\theta$ in $[0,2\pi)$ by
$$
\Hat\mu_a(\theta)=\sum_{d\in \mathbb{Z}} e^{-id\theta}\mu_a(d).
$$
Notice that, given an element $f$ of $l^1(\mathbb{Z})$, for every $\theta$ in $[0,2\pi)$, $\Hat{S(f)}(\theta)=e^{i\theta}\Hat{f}(\theta)$. Hence, since
$$
 \mu_a(d) = \left((Id,Id)\cdots A_{a_{n}} A_{a_{n-1}} \cdots A_{a_1} A_{a_0} \begin{pmatrix} \delta_0 \\ 0 \end{pmatrix}\right)(d)
 $$
 we get that
 $$
 \Hat{\mu_a}(\theta) = (1,1)\cdots \Hat{A}_{a_{n}}(\theta) \Hat{A}_{a_{n-1}}(\theta) \cdots \Hat{A}_{a_1}(\theta) \Hat{A}_{a_0}(\theta) \begin{pmatrix} 1 \\ 0 \end{pmatrix}
 $$
 where
$$
  \hA_0(\theta):= \begin{pmatrix} 1 & \frac12 e^{-i \theta} \\ 0 & \frac12 e^{i \theta} \end{pmatrix}, \qquad 
   \hA_1(\theta):= \begin{pmatrix} \frac12 e^{-i \theta} & 0 \\ \frac12 e^{i \theta} & 1 \end{pmatrix}, 
$$

\begin{lemma}\label{l.phi}
We have the following elementary identities for every $\theta$:
\begin{itemize}
 \item $\|\hA_0(\theta)\|_1 = \|\hA_1(\theta)\|_1 = \|\hA_0(\theta)\hA_1(\theta)\|_1 = \|\hA_1(\theta)\hA_0(\theta)\|_1 = 1.$
 \item$  \|\hA_0(\theta)\hA_1(\theta)\hA_0(\theta)\|_1 = \|\hA_1(\theta)\hA_0(\theta)\hA_1(\theta)\|_1 
 := \phi(\theta) = \frac{1 + \sqrt{5+4\cos\theta}}4.$
\end{itemize}
\end{lemma}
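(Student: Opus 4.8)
The plan is to exploit the fact that the norm $\|\cdot\|_1$ is precisely the maximum absolute column sum: it is submultiplicative, and — a point I will use at the very end — it is invariant under simultaneous permutation of rows and columns. Thus for each of the listed products the whole problem reduces to multiplying out the (at most) $2\times 2$ matrix, computing its two column sums, and taking the larger. The easy cases come first. For a single factor, both columns of $\hA_0(\theta)$ and of $\hA_1(\theta)$ have absolute sum exactly $1$, since the two entries of modulus $\tfrac12$ sit in the same column; hence $\|\hA_0(\theta)\|_1 = \|\hA_1(\theta)\|_1 = 1$. Multiplying out $\hA_0\hA_1$ and $\hA_1\hA_0$, one column sum is again exactly $1$ while the other equals $\phi(\theta)$; the recurring simplification here is the elementary identity $\bigl|\tfrac12 e^{-i\theta}+\tfrac14\bigr| = \tfrac14\sqrt{5+4\cos\theta}$. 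Since $\phi(\theta)\le 1$, the maximum column sum is $1$ in both double products.

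For the triple product $\hA_0\hA_1\hA_0$ I would multiply out explicitly. Its first column is $\bigl(\tfrac12 e^{-i\theta}+\tfrac14,\ \tfrac14 e^{2i\theta}\bigr)$, whose absolute sum is $\tfrac14\sqrt{5+4\cos\theta}+\tfrac14 = \phi(\theta)$ exactly. For the second column I would use the identity $e^{-2i\theta}+1 = 2\cos\theta\, e^{-i\theta}$, which collapses the two entries to moduli $\tfrac18|4\cos\theta+1|$ and $\tfrac18\sqrt{5+4\cos\theta}$, so the second column sum is $\tfrac18\bigl(|4\cos\theta+1|+\sqrt{5+4\cos\theta}\bigr)$.

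The only nontrivial point — what I expect to be the main obstacle — is then to show that this second column sum never exceeds the first, so that the maximum is exactly $\phi(\theta)$. The difference of the two column sums is $\tfrac18\bigl(2 - |4\cos\theta+1| + \sqrt{5+4\cos\theta}\bigr)$, and I would prove its nonnegativity by the substitution $u = \sqrt{5+4\cos\theta}\in[1,3]$, under which $4\cos\theta+1 = u^2-4$. The required inequality becomes $|u-2|\,|u+2|\le u+2$, that is $|u-2|\le 1$, which holds throughout $[1,3]$ (with equality only at the endpoints $\cos\theta = \pm 1$). This gives $\|\hA_0\hA_1\hA_0\|_1 = \phi(\theta)$. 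Finally, the case $\hA_1\hA_0\hA_1$ I would obtain at no extra cost from the symmetry of the two matrices: with the transposition matrix $P = \left(\begin{smallmatrix} 0 & 1 \\ 1 & 0 \end{smallmatrix}\right)$ one checks $P\hA_0(\theta)P = \hA_1(-\theta)$ and $P\hA_1(\theta)P = \hA_0(-\theta)$, whence $P\,\hA_1(\theta)\hA_0(\theta)\hA_1(\theta)\,P = \hA_0(-\theta)\hA_1(-\theta)\hA_0(-\theta)$. Since conjugation by $P$ leaves the maximum column sum unchanged and $\phi$ depends only on $\cos\theta = \cos(-\theta)$, the two triple products have equal norm $\phi(\theta)$, completing the proof.
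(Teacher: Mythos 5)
Your proof is correct, and at the top level it follows the same plan as the paper's: multiply out the products and compare the two column sums, the only nontrivial point being that for $\hA_0(\theta)\hA_1(\theta)\hA_0(\theta)$ the second column sum does not exceed the first. Where you genuinely diverge is in how that comparison and the second triple product are handled. The paper bounds the second column sum by splitting off the term $\tfrac14 e^{-2i\theta}$ with the triangle inequality and then observing that $\bigl|\tfrac18 e^{-i\theta}+\tfrac14\bigr|+\bigl|\tfrac14 e^{2i\theta}+\tfrac18 e^{i\theta}\bigr|$ equals $\bigl|\tfrac12 e^{-i\theta}+\tfrac14\bigr|$ (all these moduli being multiples of $\sqrt{5+4\cos\theta}$); you instead evaluate the second column sum exactly as $\tfrac18\bigl(|4\cos\theta+1|+\sqrt{5+4\cos\theta}\bigr)$ via $e^{-2i\theta}+1=2\cos\theta\,e^{-i\theta}$ and reduce the comparison to $|u-2|\le 1$ for $u=\sqrt{5+4\cos\theta}\in[1,3]$, which is sound (note $|u^2-4|=|u-2|(u+2)$) and has the small bonus of exhibiting the equality cases $\cos\theta=\pm1$. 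For $\hA_1(\theta)\hA_0(\theta)\hA_1(\theta)$ the paper writes the product out explicitly and matches column sums by hand, whereas you get it for free from the conjugation $P\hA_k(\theta)P=\hA_{1-k}(-\theta)$ together with invariance of the max-column-sum norm under simultaneous permutation of rows and columns and the evenness of $\phi$; that symmetry is a tidy observation that the paper does not exploit. Your treatment of the single and double products agrees with the paper's claims and computations.
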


Notice that $\phi$ is strictly less than~$1$ except when $\theta = 0$. 

\begin{proof}
For every $\theta$ in $[0,2\pi)$, we have
$$
\hA_0(\theta)\hA_1(\theta)\hA_0(\theta)= 
\begin{pmatrix}
\frac12e^{-i\theta}+\frac14 & \frac14e^{-2i\theta}+\frac18e^{-i\theta}+\frac14 \\
\frac14e^{2i\theta} & \frac18e^{i\theta} + \frac14e^{2i\theta}
\end{pmatrix},
$$
and
$$
\hA_1(\theta)\hA_0(\theta)\hA_1(\theta)= 
\begin{pmatrix}
 \frac18e^{-i\theta} + \frac14e^{-2i\theta}   &   \frac14e^{-2i\theta}   \\
 \frac14e^{2i\theta}+\frac18e^{i\theta}+\frac14   & \frac12e^{i\theta}+\frac14
\end{pmatrix}.
$$
So we have 
$$
\|\hA_0(\theta)\hA_1(\theta)\hA_0(\theta)\|_1 = \|\hA_1(\theta)\hA_0(\theta)\hA_1(\theta)\|_1 .
$$
Moreover, the triangular inequality yields
$$
\left|\frac14e^{-2i\theta}+\frac18e^{-i\theta}+\frac14\right| + \left|\frac14e^{2i\theta}+\frac18e^{i\theta}\right| \leq \left|\frac14e^{-2i\theta}\right|+\left|\frac18e^{-i\theta}+\frac14\right| + \left|\frac14e^{2i\theta}+\frac18e^{i\theta}\right|,
$$
so we have 
$$
\left|\frac14e^{-2i\theta}+\frac18e^{-i\theta}+\frac14\right| + \left|\frac14e^{2i\theta}+\frac18e^{i\theta}\right| \leq \left| \frac14e^{2i\theta}\right|  + \left|  \frac12e^{-i\theta} +\frac14  \right|.
$$
Hence the sum of the moduli of the terms in the first column is less than the sum of the moduli of the terms of the second column, which implies that
$$
\|\hA_0(\theta)\hA_1(\theta)\hA_0(\theta)\|_1 = \left| \frac14e^{-2i\theta}\right|  + \left|  \frac12e^{i\theta} +\frac14  \right|
$$
so finally
$$
\|\hA_0(\theta)\hA_1(\theta)\hA_0(\theta)\|_1=\frac{1 + \sqrt{5+4\cos\theta}}4.
$$
\end{proof}

\begin{lemma}\label{l.reductionto010}
For every $k$ in $\mathbb{N}^*$ and every $\theta$ in $[0,2\pi)$ 
$$
  \|\hA_0(\theta)\, (\hA_1(\theta))^k \,\hA_0(\theta)\|_1 \le 
  \|\hA_0(\theta)\, \hA_1(\theta) \,\hA_0(\theta)\|_1 .
$$ 
\end{lemma}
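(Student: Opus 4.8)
The plan is to reduce the claim to an estimate on the two column sums of the explicit $2\times2$ matrix $\hA_0(\theta)\hA_1(\theta)^k\hA_0(\theta)$ and to show that each of them is at most $\phi(\theta)=\tfrac{1+\sqrt{5+4\cos\theta}}4$, which by Lemma~\ref{l.phi} equals $\|\hA_0(\theta)\hA_1(\theta)\hA_0(\theta)\|_1$. Writing $r=\tfrac12 e^{-i\theta}$, so that $|r|=\tfrac12$ and $\tfrac12 e^{i\theta}=\tfrac14 r^{-1}$, a one-line induction gives that $\hA_1(\theta)^k$ is lower triangular with diagonal $(r^k,1)$ and lower-left entry $\tfrac12 e^{i\theta}\sum_{m=0}^{k-1}r^m$, a partial sum of a geometric series of ratio $r$.

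First I would multiply this out and read off the two columns of $\hA_0(\theta)\hA_1(\theta)^k\hA_0(\theta)$. Setting $g=\sum_{m=0}^{k-1}r^m$, the column sums take the form
\[
  C_1^{(k)}=\Bigl|r^k+\tfrac14 g\Bigr|+\tfrac14|g|,\qquad
  C_2^{(k)}=\tfrac12\Bigl|r^k+\tfrac14\textstyle\sum_{m=-1}^{k-1}r^m\Bigr|+\tfrac18\Bigl|\textstyle\sum_{m=-1}^{k-1}r^m\Bigr|,
\]
where in the second expression I have used $\tfrac14 g+\tfrac12 e^{i\theta}=\tfrac14\sum_{m=-1}^{k-1}r^m$. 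The quantity to bound is $\max\bigl(C_1^{(k)},C_2^{(k)}\bigr)$, and I also record that $\phi(\theta)=\tfrac14+\tfrac12|1+r|$, since $\tfrac14|r^{-1}+1|=\tfrac12|1+r|$.

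The heart of the argument is to bound each $C_i^{(k)}$ by $\phi(\theta)$ through a careful triangle inequality imitating the $k=1$ computation of Lemma~\ref{l.phi}. In each modulus I would peel off the single top-order power $r^k$ at the cost of a $2^{-k}$-term, and split the remaining partial geometric sum into its two lowest powers plus a geometric tail: for $C_2^{(k)}$ the lowest powers $r^{-1}+1$ rebuild exactly $\tfrac12|1+r|=\phi(\theta)-\tfrac14$, while the tail $\sum_{m\ge1}|r|^m$ together with the peeled-off $2^{-k}$ assembles to precisely $\tfrac14$, giving $C_2^{(k)}\le\phi(\theta)$ for every $k\ge1$. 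For $C_1^{(k)}$ I would first rewrite $r^k+\tfrac14 g=\tfrac14\sum_{m=0}^{k}r^m+\tfrac34 r^k$, so that both moduli become partial sums whose lowest powers $1+r$ contribute $\tfrac14|1+r|$ each (total $\tfrac12|1+r|$), and again the two geometric tails and the $\tfrac34 r^k$ remainder combine to exactly $\tfrac14$; here the splitting of $\sum_{m=0}^{k-1}r^m$ into $1+r$ plus tail requires $k\ge2$, and the case $k=1$ is precisely the equality $C_1^{(1)}=\phi(\theta)$ that is the right-hand side of the statement.

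The point to watch — and the only real obstacle — is that the inequality is tight: at $\theta=0$ one checks that $C_1^{(k)}=C_2^{(k)}=1=\phi(0)$ for every $k$, the operators being genuinely column-stochastic there. Consequently any estimate that sacrifices the phase cancellation between $r^k$ and the geometric sum overshoots (this already fails at $\theta=\pi/2$, $k=1$), so the grouping above must be arranged so that the $2^{-k}$ remainders cancel \emph{exactly} rather than being bounded crudely. This exact cancellation is what produces the constant $\tfrac14$, and hence the sharp bound $\phi(\theta)$, uniformly in $k$.
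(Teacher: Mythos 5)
Your proof is correct and follows essentially the same route as the paper's: compute $\hA_1(\theta)^k$ explicitly as a lower-triangular matrix built from partial geometric sums, then apply the triangle inequality while keeping the two lowest-order powers of $r=\tfrac12 e^{-i\theta}$ together, so that the geometric tail and the peeled-off $2^{-k}$ term combine to exactly $\tfrac14$ and yield the sharp bound $\phi(\theta)$. The only organizational difference is that the paper uses the recursion $s_{k+1}(\theta)=\tfrac{e^{-i\theta}}2 s_k(\theta)+\tfrac{e^{i\theta}}2$ to identify the first column sum at step $k+1$ with the second column sum at step $k$, and hence only estimates one column, whereas you bound both columns directly; both work.
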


\begin{proof}
First let us state that, for all positive integer $k$, for all $\theta$ in $[0,2\pi)$,
$$
(\hA_1(\theta))^k=\begin{pmatrix} 
                                                      \frac{e^{-ik\theta}}{2^k} & 0 \\
                                                      \sigma_k(\theta) & 1
                                                     \end{pmatrix}
$$
where
\begin{equation}\label{sigma}
 \sigma_k(\theta)=e^{2i\theta}\sum_{j=1}^k \frac{e^{-ij\theta}}{2^j}.
\end{equation}

We can compute that, for every $k$ in $n \mathbb{N}$
$$
\quad \hA_0(\theta)\, (\hA_1(\theta))^k \,\hA_0(\theta)=
\begin{pmatrix}
\frac{e^{-ik\theta}}{2^k} + \frac{e^{-i \theta}}2 \sigma_k(\theta) \quad& \frac{e^{-i(k+1)\theta}}{2^{k+1}} +\frac{e^{-2i \theta}}4 \sigma_k(\theta) + \frac14 \\
&\\
\frac{e^{i \theta}}2 \sigma_k(\theta) & \frac14 \sigma_k(\theta) + \frac{e^{2i \theta}}4
\end{pmatrix}.
$$
Moreover, noticing that for every positive integer $k$ and every $\theta$ in $[0,2\pi)$, 
$$
\sigma_{k+1}(\theta)=\frac{e^{-i\theta}}2 \sigma_k(\theta)+\frac{e^{i\theta}}2
$$
implies that
\begin{align*}
&\left|\frac{e^{-i(k+1)\theta}}{2^{k+1}} + \frac{e^{-i \theta}}2 \sigma_{k+1}(\theta)\right| + \left|\frac{e^{i \theta}}2 \sigma_{k+1}(\theta)\right|\\
=&
\left| \frac{e^{-i(k+1)\theta}}{2^{k+1}} +\frac{e^{-2i \theta}}4 \sigma_k(\theta) + \frac14 \right| + \left|\frac14 \sigma_k(\theta) + \frac{e^{2i \theta}}4\right|
.\end{align*}

This means that the sum of moduli of coefficient on the first column of matrix $\hA_0(\theta)\, (\hA_1(\theta))^{k+1} \,\hA_0(\theta)$ is equal to the sum of the moduli of the coefficients of the second column of the matrix $\hA_0(\theta)\, (\hA_1(\theta))^k \,\hA_0(\theta)$.
Hence, to prove that, for every integer $k$ and all $\theta$ in $[0,2\pi)$, 
$$  \|\hA_0(\theta)\, (\hA_1(\theta))^k \,\hA_0(\theta)\|_1 \le 
  \|\hA_0(\theta)\, \hA_1(\theta) \,\hA_0(\theta)\|_1 
$$ 
it is enough to actually show the following:
$$
\left|\frac{e^{-ik\theta}}{2^k} + \frac{e^{-i \theta}}2 \sigma_k(\theta)\right| + \left|\frac{e^{i \theta}}2 \sigma_k(\theta)\right| \leq \|\hA_0(\theta)\, \hA_1(\theta) \,\hA_0(\theta)\|_1.
$$
Using the triangle inequality, we have
$$
\left|\frac{e^{-ik\theta}}{2^k} + \frac{e^{-i \theta}}2 \sigma_k(\theta)\right| + \left|\frac{e^{i \theta}}2 \sigma_k(\theta)\right|\leq \frac1{2^k}+\left|\sigma_k(\theta)\right|
$$
which, still using the triangle inequality, again gives
$$
 \left|\frac{e^{-ik\theta}}{2^k} + \frac{e^{-i \theta}}2 \sigma_k(\theta)\right| + \left|\frac{e^{i \theta}}2 \sigma_k(\theta)\right|\leq \frac1{2^k}+\left|\frac{e^{i\theta}}2+\frac14\right|+\sum_{j=3}^k\frac1{2^j}
$$
and
$$ \left|\frac{e^{-i2\theta}}{2^2} + \frac{e^{-i \theta}}2 \sigma_2(\theta)\right| + \left|\frac{e^{i \theta}}2 \sigma_2(\theta)\right|\leq \frac1{2^2}+\left|\frac{e^{i\theta}}2+\frac14\right|.
$$
So we get
$$
\left|\frac{e^{-ik\theta}}{2^k} + \frac{e^{-i \theta}}2 \sigma_k(\theta)\right| + \left|\frac{e^{i \theta}}2 \sigma_k(\theta)\right|\leq \left|\frac{e^{i\theta}}2+\frac14\right|+\frac14
$$
which completes the proof since 
$$
\left|\frac{e^{i\theta}}2+\frac14\right|+\frac14= \|\hA_0(\theta)\, \hA_1(\theta) \,\hA_0(\theta)\|_1.
$$
\end{proof}

\begin{lemma}\label{l.gauss}
If $-\pi \le \theta \le \pi$ then 
$$
  \phi(\theta) \le e^{-\theta^2/15}. 
$$

Hence, for every positive integer $N$,
$$
\displaystyle \|\phi^N\|_2 \le \|e^{-N\theta^2/15}\|_2 = \left( \frac{15\pi}{2N} \right)^{1/4}
$$

with $\phi$ defined as in Lemma~\ref{l.phi}.
\end{lemma}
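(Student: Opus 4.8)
The plan is to establish the pointwise bound $\phi(\theta) \le e^{-\theta^2/15}$ on $[-\pi,\pi]$ first, after which the $l^2$ estimate is immediate. Indeed, once the pointwise bound is known, then since $\phi \ge 0$ we may raise it to the power $2N$ and integrate:
$$
\|\phi^N\|_2^2 = \int_{-\pi}^{\pi}\phi(\theta)^{2N}\,d\theta \le \int_{-\pi}^{\pi} e^{-2N\theta^2/15}\,d\theta \le \int_{\mathbb{R}} e^{-2N\theta^2/15}\,d\theta = \sqrt{\frac{15\pi}{2N}},
$$
using the Gaussian integral $\int_{\mathbb{R}} e^{-at^2}\,dt = \sqrt{\pi/a}$ with $a = 2N/15$; taking square roots gives the stated bound. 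Hence all the content lies in the scalar inequality $\phi(\theta) \le e^{-\theta^2/15}$.

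Since $\phi$ is even it suffices to treat $\theta \in [0,\pi]$. Writing $5 + 4\cos\theta = 9 - 8\sin^2(\theta/2)$ makes it clear that $\phi$ decreases from $\phi(0)=1$ to $\phi(\pi)=\tfrac12$. I would set $g(\theta) := \ln\phi(\theta) + \theta^2/15$ and aim to prove $g \le 0$ on $[0,\pi]$. At the endpoints one has $g(0)=0$ and $g(\pi) = -\ln 2 + \pi^2/15$, so $g(\pi)\le 0$ reduces to the numerical inequality $\pi^2 \le 15\ln 2$. The Taylor expansion $\ln\phi(\theta) = -\theta^2/12 + O(\theta^4)$ shows that $g$ is strictly negative just to the right of $0$; the gap between $1/12$ and $1/15$ is exactly the slack that allows the non-sharp constant $1/15$ to work.

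The remaining task is to promote these endpoint facts to a bound on the whole interval. Differentiating, and setting $u := \sqrt{5+4\cos\theta} = 4\phi-1 \in [1,3]$, one gets
$$
g'(\theta) = \frac{2\theta}{15} - \frac{2\sin\theta}{u(u+1)},
$$
so that on $(0,\pi)$ the sign of $g'$ is that of $h(\theta) := \theta\,u(u+1) - 15\sin\theta$. One checks $h(0)=0$ with $h<0$ just to the right of $0$, while $h(\pi) = 2\pi/15 > 0$. The goal is to show that $g$ is unimodal on $[0,\pi]$ (decreasing, then increasing, with a single interior minimum), so that $g \le \max\{g(0),g(\pi)\} = 0$. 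I expect this global control of the sign of $g'$ to be the main obstacle: because $g$ is not monotone one cannot simply integrate a one-signed derivative, and must instead verify that $h$ vanishes exactly once on $(0,\pi)$ — equivalently that $h$ has a single interior minimum, which would follow from $h'(0) < 0$, $h'(\pi) > 0$ together with a monotonicity analysis of $h'$. Should the unimodality argument prove delicate, an alternative is to split at some $\theta_0 \approx 1.5$: on $[0,\theta_0]$ use the elementary estimate $\phi(\theta) \le 1 - \tfrac13\sin^2(\theta/2)$ combined with $\ln(1-y)\le -y$ and $\sin(\theta/2)\ge \theta/\sqrt5$, while on $[\theta_0,\pi]$ compare the two decreasing functions $\phi$ and $e^{-\theta^2/15}$ directly, for instance via concavity of $-g$ there (so that $-g$ dominates its nonnegative endpoint chord).
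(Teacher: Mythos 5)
The paper offers no argument here (``the proof is left to the reader''), so there is nothing to compare against; judged on its own terms, your proposal has the right architecture but stops short of a proof. The reduction of the $L^2$ estimate to the pointwise bound is correct and complete: $\|\phi^N\|_2^2=\int_{-\pi}^{\pi}\phi^{2N}\le\int_{\Set{R}}e^{-2N\theta^2/15}\,d\theta=\sqrt{15\pi/(2N)}$. Your endpoint analysis of $g(\theta)=\ln\phi(\theta)+\theta^2/15$ is also correct: $g(0)=0$, $g(\pi)=\pi^2/15-\ln 2<0$ (since $\pi^2\approx 9.87<15\ln 2\approx 10.40$), and the expansion $\ln\phi(\theta)=-\theta^2/12+O(\theta^4)$ shows $g<0$ just right of the origin. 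But the decisive step --- that $g'$, whose sign is that of $h(\theta)=\theta\,u(u+1)-15\sin\theta$ with $u=\sqrt{5+4\cos\theta}$, changes sign exactly once on $(0,\pi)$, so that $g\le\max\{g(0),g(\pi)\}=0$ --- is only announced, not established. (Also note $h(\pi)=2\pi$, not $2\pi/15$; harmless for the sign.) The inequality is tight enough that this cannot be waved away: $g$ genuinely dips to about $-0.12$ near $\theta\approx 2.6$ and climbs back to $-0.035$ at $\pi$, and your fallback's second half (concavity of $-g$ on $[\theta_0,\pi]$) is itself numerically borderline near $\theta_0\approx 1.5$, so neither route is yet a proof.

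If you are willing to trade the constant $1/15$ for a worse one --- which costs nothing in Theorem~\ref{th.asympt}, where only the rate $l(a)^{-1/4}$ matters --- there is a two-line complete argument along the lines of your own first fallback estimate: from $5+4\cos\theta=9-8\sin^2(\theta/2)$ and $\sqrt{1-x}\le 1-x/2$ one gets $\phi(\theta)\le 1-\tfrac13\sin^2(\theta/2)\le e^{-\sin^2(\theta/2)/3}$, and the concavity bound $\sin(\theta/2)\ge\theta/\pi$ on $[0,\pi]$ then gives $\phi(\theta)\le e^{-\theta^2/(3\pi^2)}$ uniformly on $[-\pi,\pi]$. This proves the lemma with $3\pi^2\approx 29.6$ in place of $15$ and feeds into the rest of the section unchanged. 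To obtain the stated constant $1/15$ you would still need to carry out the single-sign-change analysis of $h$ (or an explicit interval split with verified constants); as it stands, that is the missing piece.
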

\begin{proof}
 The proof is left to the reader.
\end{proof}

\begin{proof}[Proof of Theorem~\ref{th.asympt}]
%
We remind the reader that $\mu_a = \mu_{a,1} + \mu_{a,2}$ is the sum of the components 
of the vector $\bar\mu_a$, which is calculated as an infinite product 
$$
  \bar\mu_a = 
  \begin{pmatrix} \mu_{a,1} \\ \mu_{a,2} \end{pmatrix} = 
  \cdots A_{a_n} A_{a_{n-1}} \cdots A_{a_1} A_{a_0} 
  \begin{pmatrix} \delta_0 \\ 0 \end{pmatrix}. 
$$
Let us represent the binary expansion of $a$ as a sequence 
of $l(a)$ groups of $11\ldots1$ separated by zeros: 
$$
  \underline{a} = 
  0^{m_1}\underline{11\ldots1}0^{m_2}\underline{11\ldots1}0\dots\dots\dots  
         0^{m_{l(a)}}\underline{11\ldots1}. 
$$
We apply Lemma~\ref{l.reductionto010} to half of the patterns $011..10$ and then use the bound of
Lemma~\ref{l.phi}:
$$
  \|\hA_0(\theta)\, (\hA_1(\theta))^k \,\hA_0(\theta)\|_1 \le \phi(\theta). 
$$

We can do this only on half of $011..10$'s because we need to avoid problematic patterns like $\ldots0111{\mathbf 0}11110\ldots$, 
since we need at least two ``0'' between two blocks on ``1'' to be able to apply Lemma~\ref{l.reductionto010} twice but we have only one ``0'' in between. Thus we get 
$$
  \|\cdots \hA_{a_n}(\theta) \cdots \hA_{a_0}(\theta)\|_1 \le \phi(\theta)^{N}, 
  \quad N = \frac{l(a)-1}2, 
$$
and hence, for each $\theta$ in $[0,2\pi)$ and $j$ in $\{1,2\},$
$$
  |\hat\mu_{a,j}(\theta)| \le 
    \phi(\theta)^{N}  \left\| \vv{1}{0} \right\|_1 = e^{-N\theta^2/15},  
$$
and
$$
  \|\mu_{a,j}\|_2 = \frac1{\sqrt{2\pi}} \|\hat\mu_{a,j}\|_2 \le 
  \frac1{\sqrt{2\pi}} \|\phi^N\|_2 \leq 
  \frac1{\sqrt{2\pi}} \sqrt{\int_{-\infty}^\infty e^{-2Nt^2/15} \,dt }= \left( \frac{15}{8\pi N} \right)^{1/4}
$$
%
by Lemma~\ref{l.gauss}.

Now, $ N = \frac{l(a)-1}2$ yields
$$
  \|\mu_a\|_2 \le \sqrt{2} \left( \frac{15}{\pi(l(a)-1)} \right)^{1/4} 
    = O\!\left(l(a)^{-1/4}\right). 
$$
\end{proof}

\begin{remark}
It follows directly from Theorem~\ref{th.asympt} 
that the density $\mu_a(j) \to 0$ as $l(a) \to \infty$. 

Let us also remark that this theorem actually gives important information concerning the fact that the probability measure $\mu_a$ is linked with the complexity of the binary expansion of $a$ (which is measured by $l(a)$). In fact, for every integer $n$, $\mu_{2^n}=\mu_1$, so it is possible to find arbitrarily large $a$ such that $||\mu_a||_2$ does not tend to zero (actually whenever $l(a)$ does not tend to infinity).
\end{remark}
\subsection{Asymptotic mean and variance of $\mu_a$}
Let us first recall the following:
$$
  \hA_0(\theta):= \begin{pmatrix} 1 & \frac12 e^{-i \theta} \\ 0 & \frac12 e^{i \theta} \end{pmatrix}, \qquad 
   \hA_1(\theta):= \begin{pmatrix} \frac12 e^{-i \theta} & 0 \\ \frac12 e^{i \theta} & 1 \end{pmatrix}, 
$$
for every $\theta$ in $[0,2\pi)$.

We begin by using Taylor's expansion on the matrices $\hA_0(\theta)$ and $\hA_1(\theta)$ to get
$$
  \hA_k(\theta) = I_k + \theta \,\alpha_k + \theta^2 \,\beta_k + O(\theta^3), \qquad k \in \{0,\, 1\}, 
$$
where 
\begin{alignat*}{3}
  &
  I_0 = \mm{1}{\frac12}{0}{\frac12}, \quad\; && 
  \alpha_0 = \frac{i}2 \mm{0}{-1}{0}{1}, \quad\; && 
  \beta_0 = -\frac14 \mm{0}{1}{0}{1}, 
  \\
  &
  I_1 = \mm{\frac12}{0}{\frac12}{1}, \quad\; && 
  \alpha_1 = \frac{i}2 \mm{-1}{0}{1}{0}, \quad\; && 
  \beta_1 = -\frac14 \mm{1}{0}{1}{0}, 
\end{alignat*}
and we observe that the following relations hold
\begin{gather}
  (1,1)I_k = (1,1), \qquad 
  \\
  (1,1)\alpha_k = (0,0), \qquad 
  (1,1)\beta_0 = -\frac12(0,1), \qquad 
  (1,1)\beta_1 = -\frac12(1,0). 
\end{gather}

\begin{lemma}
For all $\theta$ in $[0, 2\pi)$, the product of matrices $(\hA_0(\theta))^k$ converges as $k$ goes to $+\infty$. We denote the limit $ \hA_0^\infty(\theta)$, and further we have the following equality:
$$
  \hA_0^\infty(\theta) = \mm{ 1 }{  \frac{ e^{-i\theta} }{ 2-e^{i\theta} }  }{ 0 }{ 0 }. 
$$
\end{lemma}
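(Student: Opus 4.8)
The plan is to exploit that $\hA_0(\theta)$ is upper triangular, so that its powers can be written down explicitly and the limit simply read off. Since $\hA_0(\theta)$ has diagonal entries $1$ and $\frac12 e^{i\theta}$, the second of which has modulus $\frac12<1$, there is a clear spectral gap and convergence of the powers is essentially automatic; the only work is to identify the limiting off-diagonal coefficient.

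First I would record that
$$
\hA_0(\theta) = \mm{1}{\frac12 e^{-i\theta}}{0}{\frac12 e^{i\theta}}
$$
is upper triangular, and abbreviate $\lambda := \frac12 e^{i\theta}$ for the non-trivial diagonal entry and $b := \frac12 e^{-i\theta}$ for the off-diagonal entry. A straightforward induction on $k$ then gives
$$
(\hA_0(\theta))^k = \mm{1}{b\,(1 + \lambda + \cdots + \lambda^{k-1})}{0}{\lambda^k} = \mm{1}{b\,\dfrac{1 - \lambda^k}{1 - \lambda}}{0}{\lambda^k},
$$
where the closed form for the geometric sum is valid because $\lambda \neq 1$ (indeed $|\lambda| = \tfrac12$). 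The inductive step is the single multiplication $(\hA_0(\theta))^{k}\,\hA_0(\theta)$, whose $(1,2)$ entry reorganises into $b\sum_{j=0}^{k}\lambda^j$.

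Next I would pass to the limit $k \to +\infty$. Because $|\lambda| = \frac12 < 1$, we have $\lambda^k \to 0$ and $\frac{1-\lambda^k}{1-\lambda} \to \frac{1}{1-\lambda}$, so the matrix converges entrywise to
$$
\hA_0^\infty(\theta) = \mm{1}{\frac{b}{1-\lambda}}{0}{0}.
$$
It then remains only to simplify the surviving coefficient:
$$
\frac{b}{1-\lambda} = \frac{\frac12 e^{-i\theta}}{1 - \frac12 e^{i\theta}} = \frac{e^{-i\theta}}{2 - e^{i\theta}},
$$
which is exactly the claimed entry.

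There is no genuine obstacle: convergence follows immediately from the modulus of the dominant off-diagonal diagonal eigenvalue being $\frac12<1$. The only points needing any care are the elementary induction establishing the power formula and the final algebraic simplification. I would also note that the denominator $2 - e^{i\theta}$ never vanishes, since $|e^{i\theta}| = 1 \neq 2$; hence the limit is well defined for every $\theta \in [0,2\pi)$, including $\theta = 0$, and the convergence is in fact uniform in $\theta$ as $\lambda^k$ and the tail of the geometric series are controlled by $2^{-k}$ independently of $\theta$.
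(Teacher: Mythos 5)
Your proof is correct and follows essentially the same route as the paper: both compute $(\hA_0(\theta))^k$ explicitly as an upper-triangular matrix whose $(1,2)$ entry is a partial geometric sum (your $b\,\frac{1-\lambda^k}{1-\lambda}$ is exactly the paper's $\overline{s_k(\theta)}$), and then pass to the limit using $|\lambda|=\tfrac12$. You merely spell out the induction and the summation that the paper leaves implicit.
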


\begin{proof}
We recall that $\sigma_k$ is defined in Section 3.1, Equation~(\ref{sigma}).

The lemma is proved by remarking the following:
$$
(\hA_0(\theta))^k=\begin{pmatrix} 
                                                      1 & \overline{\sigma_k(\theta)} \\
                                                      0 & \frac{e^{ik\theta}}{2^k}
                                                     \end{pmatrix}.
$$
\end{proof}

Now the asymptotic expansion of $\hA_0^\infty$ is

$$\hA_0^\infty(\theta)=I_{\infty} + \theta^2 \beta_{\infty} + O(\theta^3), $$

where

$$
 I_{\infty}=\mm1100, \quad \beta_{\infty} = - \mm0100.$$

\begin{definition}
Given $a$ in $\mathbb{N}$ of binary length $N$, we define the following product: $$\Pi_a(\theta) = \hA_0^\infty \hA_{a_N}(\theta) \cdots \hA_{a_0}(\theta)$$.  

\end{definition}

Let us recall that, for every $a$ in $\mathbb{N}$,

$$\Hat{\mu_a}(\theta)=(1,1) \cdot \Pi_a(\theta) \cdot \begin{pmatrix}
                                                      1 \\
                                                      0
                                                     \end{pmatrix}.
$$

\begin{lemma}\label{l.taylor}

Let $v_0=\begin{pmatrix}
                                                      1 \\
                                                      0
                                                     \end{pmatrix}.$ The product $(1,1) \cdot \Pi_a(\theta) \cdot v_0$ has the following asymptotic expansion:
$$
  (1,1) \cdot \Pi_a(\theta) \cdot v_0 = 1 + V(a)\,\theta^2 + O(\theta^3), 
$$
where
$$
  V(a) = (1,1)\left(\beta_{\infty} v_{N+1} + \beta_{a_N} v_N + \ldots + \beta_{a_0} v_0\right), 
$$
and, for all $j$ in $\{ 1,..., N+1 \}$,
$$
v_j = I_{a_{j-1}} \cdots I_{a_0} v_0. 
$$
\end{lemma}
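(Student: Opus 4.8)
The plan is to expand $\Pi_a(\theta) = \hA_0^\infty(\theta)\,\hA_{a_N}(\theta)\cdots\hA_{a_0}(\theta)$ as the product of the Taylor expansions of its $N+2$ factors and to collect the coefficients of $\theta^0$, $\theta^1$ and $\theta^2$, reading off the constant term, checking that the linear term vanishes, and identifying the quadratic coefficient with $V(a)$. I would write each factor in the form $I + \theta\alpha + \theta^2\beta + O(\theta^3)$, noting that for the leftmost factor $\hA_0^\infty$ one has $\alpha_\infty = 0$, since its expansion recorded above carries no linear term. Multiplying out and grouping by total degree in $\theta$, the coefficient of $\theta^0$ is $I_\infty I_{a_N}\cdots I_{a_0}$; the coefficient of $\theta^1$ is a sum over the single factors that contribute their $\alpha$ while the others contribute $I$; and the coefficient of $\theta^2$ splits into a single-$\beta$ part (one factor contributes $\beta$, the rest $I$) and a double-$\alpha$ part (two factors each contribute $\alpha$, the rest $I$).

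The computation collapses once we left-multiply by $(1,1)$ and use the two relations recorded above, namely $(1,1)I_k = (1,1)$, which also holds for $I_\infty$, and $(1,1)\alpha_k = (0,0)$. The first relation means that whenever $(1,1)$ meets a leading block of factors all set to $I$ it passes through unchanged and reappears as $(1,1)$ immediately to the left of the first non-$I$ factor. Applied to the $\theta^0$ term this gives $(1,1)I_\infty I_{a_N}\cdots I_{a_0}v_0 = (1,1)v_0 = 1$, since $v_0 \in D$.

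For the linear term, each summand has the shape $(1,1)\,(\text{leading }I\text{'s})\,\alpha_{a_j}\,(\text{trailing }I\text{'s})\,v_0 = (1,1)\alpha_{a_j}v_j$, which vanishes by $(1,1)\alpha_k = (0,0)$; hence there is no $\theta$ contribution, explaining the absence of a linear term in the statement. The same mechanism kills the double-$\alpha$ part of the quadratic term: in every such summand the leftmost of the two $\alpha$ factors is again preceded only by $I$'s, so $(1,1)$ reaches it unchanged and is annihilated. What survives at order $\theta^2$ is exactly the single-$\beta$ part: the factor $\hA_{a_j}$ contributing $\beta_{a_j}$ yields $(1,1)\beta_{a_j}\,I_{a_{j-1}}\cdots I_{a_0}v_0 = (1,1)\beta_{a_j}v_j$, while $\hA_0^\infty$ contributing $\beta_\infty$ yields $(1,1)\beta_\infty v_{N+1}$. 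Summing these reproduces $V(a) = (1,1)\bigl(\beta_\infty v_{N+1} + \beta_{a_N}v_N + \cdots + \beta_{a_0}v_0\bigr)$, and assembling the three orders gives $1 + V(a)\theta^2 + O(\theta^3)$.

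The only genuine care needed — and the step I expect to be the main obstacle to present cleanly rather than to conceive — is the bookkeeping of the product expansion: tracking which of the $N+2$ positions supplies the higher-order term and verifying that the trailing block of $I$'s acting on $v_0$ produces exactly $v_j$ under the indexing $v_j = I_{a_{j-1}}\cdots I_{a_0}v_0$. Once the principle that $(1,1)$ passes through a block of $I$'s is in place, every term with a leading $\alpha$ dies, so only the clean single-$\beta$ terms remain and the index matching becomes mechanical; uniformity of the $O(\theta^3)$ remainder follows since the finitely many matrix products are bounded entrywise on the compact circle.
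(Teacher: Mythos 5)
Your proposal is correct and follows essentially the same route as the paper: expand each factor of $\Pi_a(\theta)$ to second order, use $(1,1)I_k=(1,1)$ and $(1,1)\alpha_k=(0,0)$ to kill the linear term and every double-$\alpha$ contribution (since the leftmost $\alpha$ in such a term is preceded only by $I$'s), leaving exactly the single-$\beta$ terms $(1,1)\beta_{a_j}v_j$ and $(1,1)\beta_\infty v_{N+1}$. Your bookkeeping of the indexing $v_j = I_{a_{j-1}}\cdots I_{a_0}v_0$ is in fact slightly more explicit than the paper's.
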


In other words, this Lemma~\ref{l.taylor} states that $\mu_a$ has mean $0$ and variance $-2V(a)$. Indeed, the moments of a probability measure are given by the successive derivatives near 0 of its Fourier transform.

\begin{proof}
Let us first recall that

$$
(1,1)I_0=(1,1)I_1=(1,1)I_\infty=(1,1).
$$

Moreover,
$$
(1,1) \cdot \Pi_a(\theta) \cdot v_0=(1,1)\left(I_{\infty} + \theta^2 \beta_{\infty} + O(\theta^3)\right)\prod_{i=0}^N\left(I_{a_i}+\alpha_{a_i}\theta + \beta_{a_i}\theta^2 + O(\theta^3)\right)v_0,
$$
where the product is defined as:
$$
\prod_{i=0}^N M_i := M_N...M_0.
$$
Hence the constant term in the asymptotic expansion of $ (1,1) \cdot \Pi_a(\theta) \cdot v_0$ is $1$ (since $(1,1) \begin{pmatrix}
                                                                                                                    1\\0
                                                                                                                   \end{pmatrix}
=1$). In addition, we have
$$
(1,1)\alpha_0=(1,1)\alpha_1=(0,0),
$$
so the term of degree 1 is 0.

With the same argument we can compute the quadratic term $V(a)$.
Notice that every coefficient $\alpha_0$ or $\alpha_1$ is killed by left multiplication by $(1,1)$.
Hence the terms of the form
$$
I_{\infty}...I_{a_{j+1}}\alpha_{a_j}I_{a_{j-1}}...I_{a_{i+1}}\alpha_{a_i}I_{a_{i-1}}...I_{a_0}
$$
disappear after  left multiplication by $(1,1)$, and thus do not contribute in any way to the coefficient of the quadratic term.

So the only terms contributing to $V(a)$ in the product $\hA_\infty(\theta) \hA_{a_N}(\theta)...\hA_{a_0}(\theta)$ are the terms with coefficient $\beta_{a_j}$ with $j$ in $\llbracket0,N\rrbracket$ and the term with coefficient $\beta_\infty$.
\end{proof}

\begin{definition}
Let $a$ be a positive integer with binary expansion $\underline{a}=a_0...a_n$. For every $j\in \llbracket 0,n\rrbracket$, let
$$
b^{1}_j:=\max \left\{k+1\ | \ a_{j-k}=...=a_j\right\}
$$
and
$$
b^{2}_j:=b^{1}_{j-b^{1}_j}.
$$

\end{definition}

In other words, $b^{1}_j$ is the length of a consecutive and maximal sequence of digits equal to $a_j$ to the left of $j$ (including position $j$) in $\underline{a}$ and $b^{2}_j$ is the length of the next block of identical digits to the left of the block containing the digit~$a_j$.

For instance, if $\underline{a}=100111110011$, then $b^1_6=4$ since $a_2...a_6=01111$ and $b^2_6=2$.

\begin{lemma}\label{l.sum}
For every $j$ in $\{0,...,N\}$, the value $(1,1)\beta_{a_j} v_j$ is estimated as follows 
$$
  \frac1{2^{b^{1}_j}} \left( 1 - \frac1{2^{b^{2}_j}} \right) \le 
  -(1,1)\beta_{a_j} v_j \le 
  \frac1{2^{b^{1}_j}}, 
$$
\end{lemma}

\begin{proof}
Assume that $a_{j} = 0$ (the proof for the case $a_{j} = 1$ is similar). 
Observe that $I_0$ and $I_1$ contract the segment $[(1,0),(0,1)]$  
to the first  second point respectively with a contraction factor of $2$. 

More precisely, 
$$
\left\{I_0\begin{pmatrix}
           1-x\\
           x
          \end{pmatrix}\ |\ x \in [0,1] \right\}=\left\{\begin{pmatrix}
           1-x\\
           x
          \end{pmatrix}\ |\ x \in \left[0,\frac12\right] \right\}
$$
and
$$
\left\{I_1\begin{pmatrix}
           x\\
           1-x
          \end{pmatrix}\ |\ x \in [0,1] \right\}=\left\{\begin{pmatrix}
           x\\
           1-x
          \end{pmatrix}\ |\ x \in \left[0,\frac12\right] \right\}.
$$
Hence, with $t=j-b_j^2-b_j^1$,
$$
I_t...I_0 \begin{pmatrix}
           1\\0
          \end{pmatrix} \in [(1,0),(0,1)].
$$
Moreover, the block $I_1^{b^2_j}$ maps $[(1,0),(0,1)]$ to the segment $[(2^{-b^2_j},(1-2^{-b^2_j})),(0,1)]$. 
Then the block $I_0^{b^1_j-1}$ contracts the latter to 
$$
  \Bigl[
    \bigl(
      1 - 2^{-b^1_j+1} (1-2^{-b^2_j}),   2^{-b^1_j+1} (1-2^{-b^2_j})
    \bigr),\; 
    \bigl(
      1 - 2^{-b^1_j+1},   2^{-b^1_j+1} 
    \bigr)
  \Bigr].
$$
Multiplying on the left by $(1,1)\beta_{a_j} = (1,1)\beta_0=-\frac12(0,1)$ means taking the second coordinate 
with an~additional coefficient $-1/2$, and the lemma then follows. 
\end{proof}

\begin{remark}
 The same proof yields
 
 $$
 -(1,1)\beta_{\infty} v_{N+1} \le 
  \frac1{2^{b^{1}_{N}-1}}.
 $$
 
\end{remark}

Moreover, we have the following lemma.
\begin{lemma}\label{lemma.minor1}
 $$
l(a)\leq \sum_{j=0}^N \frac1{2^{b^1_j}} \le 2l(a)+1.
$$
\end{lemma}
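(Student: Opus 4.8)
The plan is to exploit the decomposition of $\underline{a}$ into maximal blocks of identical digits and to compute the contribution of each block to the sum $\sum_{j=0}^N 2^{-b^1_j}$ separately. Write $\underline{a}$ as a concatenation of $m$ maximal runs of equal digits, $\underline{a} = B_1 B_2 \cdots B_m$, where $B_r$ has length $\ell_r \ge 1$ and consecutive blocks carry opposite digits. The key observation is that $b^1_j$ depends only on the position of $j$ inside its own block: if $j$ is the $t$-th position, counting from the left edge of the run $B_r$, then the maximal run of digits equal to $a_j$ ending at $j$ and extending leftwards has length exactly $t$, so $b^1_j = t$ as $t$ runs through $1, \ldots, \ell_r$.

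Consequently the contribution of block $B_r$ to the sum is $\sum_{t=1}^{\ell_r} 2^{-t} = 1 - 2^{-\ell_r}$, and summing over all blocks gives
$$
\sum_{j=0}^N \frac{1}{2^{b^1_j}} = \sum_{r=1}^m \left(1 - \frac{1}{2^{\ell_r}}\right) = m - \sum_{r=1}^m \frac{1}{2^{\ell_r}}.
$$
By the remark at the start of this section, the number $m$ of maximal blocks equals $2l(a)$ when $a$ is even and $2l(a)+1$ when $a$ is odd, so in either case $2l(a) \le m \le 2l(a)+1$.

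To finish I would bound the correction term $\sum_{r=1}^m 2^{-\ell_r}$. For the upper bound this term is strictly positive, whence $\sum_{j} 2^{-b^1_j} < m \le 2l(a)+1$. For the lower bound, since each $\ell_r \ge 1$ we have $\sum_{r=1}^m 2^{-\ell_r} \le m/2$, so that $\sum_{j} 2^{-b^1_j} \ge m - m/2 = m/2 \ge l(a)$; both inequalities of the lemma follow. There is no serious obstacle here: once the per-block evaluation $b^1_j = t$ is correctly identified, the statement reduces to the elementary estimate $0 < \sum_{r} 2^{-\ell_r} \le m/2$ together with the block count from the remark. The only point requiring care is the correct reading of the definition of $b^1_j$ as the offset of $j$ within its own maximal block of identical digits.
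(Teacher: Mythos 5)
Your proof is correct and follows essentially the same route as the paper: decompose $\underline{a}$ into maximal blocks, note that each block of length $\ell_r$ contributes the partial geometric sum $\sum_{t=1}^{\ell_r}2^{-t}$ (which lies in $[\tfrac12,1)$), and combine with the block count $m\in\{2l(a),2l(a)+1\}$. The only cosmetic difference is that you evaluate each block's contribution exactly as $1-2^{-\ell_r}$ before bounding it, whereas the paper bounds the partial sums directly.
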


\begin{proof}
This lemma is obtained by noticing that each block of identical digits in the binary expansion of $a$ spans a term $\sum_{j=0}^N \frac1{2^{b^1_j}}$ which is a partial sum of a geometric sequence of ratio $\frac12$ and first term $\frac12$, so each is smaller than 1 and greater than $\frac12$. In fact, let us assume that there are $k$ blocks in the binary expansion of $a$ of lengths $p_1,...,p_k$ (i.e $\underline{a}=0^{p_1}1^{p_2}...0^{p_{k-1}}1^{p_k}$, for example). Then
$$
\sum_{j=0}^N \frac1{2^{b^1_j}}=\sum_{i=1}^{k}\sum_{j=1}^{p_i}\left(\frac12\right)^{j}
$$
and 
$$
\sum_{i=1}^{k} \frac12 \leq \sum_{i=1}^{k}\sum_{j=1}^{p_i}\left(\frac12\right)^{j} \leq \sum_{i=1}^{k} 1.
$$
Moreover, $k$ can be equal to either $2l(a)+1$ or $2l(a)$, which completes the proof.
\end{proof}

\begin{theorem}\label{thm.var}
For every integer $a$, the variance $-2V(a)$ of $\mu_a$ has bounds
 
 $$
 l(a)-1 \leq -2V(a) \leq 2(2 l(a)+2).
 $$
\end{theorem}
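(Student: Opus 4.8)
The plan is to read the variance off directly from the lemma computing $V(a)$, which gives
$$
  V(a) = (1,1)\Bigl(\beta_\infty v_{N+1} + \sum_{j=0}^N \beta_{a_j} v_j\Bigr),
$$
and then to estimate $-2V(a)$ termwise. First I would check that every summand of $-V(a)$ is nonnegative: since $v_0 = \vv10 \in D$ and both $I_0$ and $I_1$ map the segment $[(1,0),(0,1)]$ into itself with nonnegative coordinates, each $v_j$ has nonnegative entries, and the relations $(1,1)\beta_0 = -\frac12(0,1)$, $(1,1)\beta_1 = -\frac12(1,0)$, $(1,1)\beta_\infty = -(0,1)$ show that $-(1,1)\beta_{a_j} v_j \ge 0$ and $-(1,1)\beta_\infty v_{N+1} \ge 0$. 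Thus $-V(a) = \sum_{j=0}^N \bigl(-(1,1)\beta_{a_j}v_j\bigr) + \bigl(-(1,1)\beta_\infty v_{N+1}\bigr)$ is a sum of nonnegative terms, and the whole theorem reduces to two-sided control of these terms, which is exactly what Lemma~\ref{l.sum} and the remark following it provide.

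For the upper bound I would apply the right-hand estimate $-(1,1)\beta_{a_j}v_j \le 2^{-b^1_j}$ of Lemma~\ref{l.sum} to every interior term and the bound of the subsequent remark to the single $\beta_\infty$ term, and then sum. Invoking the upper inequality $\sum_{j=0}^N 2^{-b^1_j} \le 2l(a)+1$ of Lemma~\ref{lemma.minor1} collapses the interior contribution into $2l(a)+1$, while the $\beta_\infty$ term contributes only an $O(1)$ amount coming from the leading block; for $l(a)$ large enough this boundary contribution is absorbed into the stated constant and yields $-2V(a) \le 2(2l(a)+1)$.

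For the lower bound I would use the left-hand estimate $-(1,1)\beta_{a_j}v_j \ge 2^{-b^1_j}\bigl(1 - 2^{-b^2_j}\bigr)$ of Lemma~\ref{l.sum}. Since $b^2_j \ge 1$ whenever it is defined, the factor $1 - 2^{-b^2_j}$ is at least $\frac12$, so each such term is at least $\frac12\, 2^{-b^1_j}$. Discarding the nonnegative $\beta_\infty$ term together with the terms of the least significant block of $\underline a$, for which $b^2_j$ is not defined — these discarded terms form a single geometric block and cost strictly less than $1$ — and then applying the lower inequality $\sum_{j=0}^N 2^{-b^1_j} \ge l(a)$ of Lemma~\ref{lemma.minor1}, I would obtain $-V(a) \ge \frac12\bigl(l(a) - 1\bigr)$, that is $-2V(a) \ge l(a)-1$.

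The routine part is the summation; the delicate point, and the reason for the hypothesis that $l(a)$ be large, is the bookkeeping at the two ends of $\underline a$: the extra $\beta_\infty$ contribution sitting above the most significant block on the upper side, and the least significant block where $b^2_j$ is undefined on the lower side. Each of these is an $O(1)$ correction to a quantity of size $\Theta(l(a))$, so once $l(a)$ is large they fit inside the stated constants. Verifying that the sign conventions line up and that nothing is double-counted between the $\beta_{a_N}$ term and the $\beta_\infty$ term is the step where I would be most careful.
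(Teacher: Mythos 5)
Your argument is essentially the paper's own proof: both estimate $-V(a)$ termwise via Lemma~\ref{l.sum}, treat the $\beta_\infty$ term with the subsequent remark, drop the least significant block (where $b^2_j=0$) for the lower bound, and conclude with the two inequalities of Lemma~\ref{lemma.minor1}. The one caveat --- shared with the paper, whose displayed computation actually yields $-2V(a)\le 2(2l(a)+2)$ --- is that a positive $O(1)$ contribution from the $\beta_\infty$ term cannot literally be ``absorbed'' to recover the stated constant $2(2l(a)+1)$; this is a cosmetic discrepancy in the theorem's statement rather than a defect of your reasoning.
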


\begin{proof}
The idea of this theorem is to see that each block of same digits 
in the binary expansion of~$a$ has an~impact on $V(a)$ estimated by a~constant.

Let us estimate the value of $V(a)$. By Lemma~\ref{l.sum} we have that
$$
  -V(a) = -\sum_{j=0}^N (1,1)\beta_{a_j} v_j -(1,1)\beta_{\infty} v_{N+1} \le \sum_{j=0}^N \frac1{2^{b^1_j}}+\frac1{2^{b^{1}_{N}-1}}.
$$
 Moreover, the upper bound of Lemma~\ref{lemma.minor1} yields
$$-2V(a) \leq 2(2 l(a)+2).$$

Now we will prove the lower bound:
$$
  -V(a) \ge \frac12(l(a)-1). 
$$
Note that
$$
  -V(a) = -\sum_{j=0}^N (1,1)\beta_{a_j} v_j - (1,1)\beta_{\infty}v_{N+1} \geq  -\sum_{j=0}^N (1,1)\beta_{a_j} v_j
$$
so, using Lemma~\ref{l.sum}, we have:
$$
-V(a) \geq \sum_{j=0}^N\left(\frac1{2^{b^{1}_j}} \left( 1 - \frac1{2^{b^{2}_j}} \right)\right).
$$
Whence, 
$$
-V(a) \geq \sum_{j=0}^N\frac1{2^{b^{1}_j}} -\sum_{j=0}^N \frac1{2^{b^{1}_j+b^{2}_j}}.
$$
Let us denote by $k$ the largest integer such that $a_0=...=a_k$.
Noticing that $b^{2}_j=0$ for all $j$ in $\{ 0,..., k\}$, we have that
$$
-V(a) \geq \sum_{j=k+1}^N\frac1{2^{b^{1}_j}} -\sum_{j=k+1}^N \frac1{2^{b^{1}_j+b^{2}_j}}.
$$
Moreover, for every integer $j$ larger than $k$, $b^{2}_j\geq 1$, hence
$$
-V(a) \geq \frac12 \sum_{j=k+1}^N\frac1{2^{b^{1}_j}} 
$$
from which we derive
$$
-V(a) \geq \frac12 (l(a)-1)
$$
by applying the lower bound of Lemma~\ref{lemma.minor1}. This proves Theorem~\ref{thm.var}. 
\end{proof}

On a final note, it is interesting to remark that the bounds in Theorem~\ref{thm.var} might not be optimal but that for a sequence of integers $(a(n))_{n\in \mathbb{N}}$ such that $\lim_{n\rightarrow \infty} l(a(n))=+\infty$ and such that the limit of $\frac{-2V(a(n))}{l(a(n))}$ exists, then this limit can take different values in the interval $[1,4]$. We give some examples of computer simulations in Figure~\ref{f.ratio}.

It would be interesting to understand the asymptotic behavior of $\frac{-2V(a(n))}{l(a(n))}$. Having a necessary condition for convergence and a precise idea of how it behaves asymptotically would help in understanding the variance of the probability measure $\mu_a$.

\newpage

\begin{figure}[ht]
\subfloat[$\frac{-2V(a(n))}{l(a(n))}$ for $a(n)=\sum_{k=0}^n 2^{2k}$]{
 \includegraphics[scale=0.3]{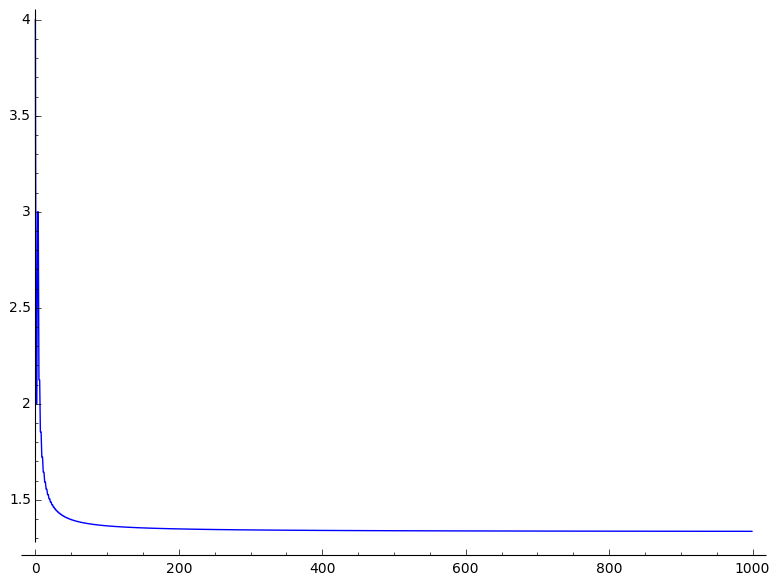}
}
\subfloat[$\frac{-2V(a(n))}{l(a(n))}$ for $a(n)=\sum_{k=0}^n 2^{4k}+2^{4k+1}$]{
 \includegraphics[scale=0.3]{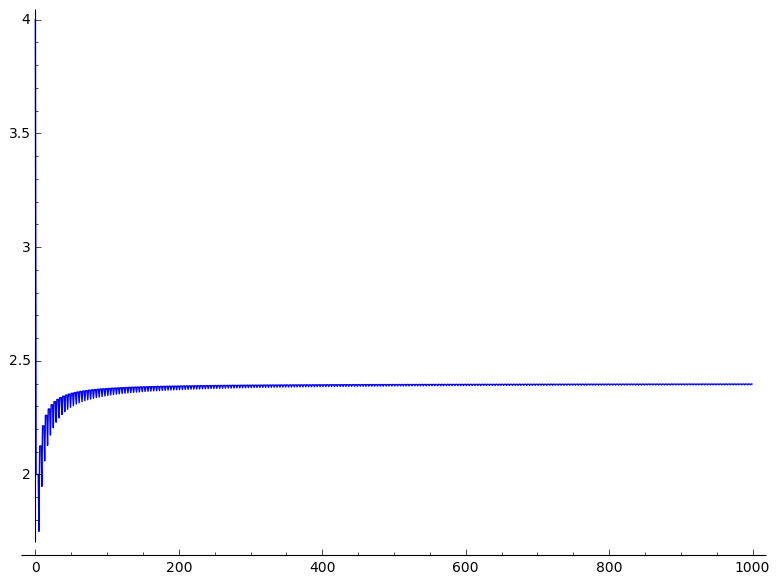}
 }

\subfloat[$\frac{-2V(a(n))}{l(a(n))}$ for $a(n)=\sum_{k=0}^n 2^{6k}+2^{6k+1}+2^{6k+2}$]{
 \includegraphics[scale=0.3]{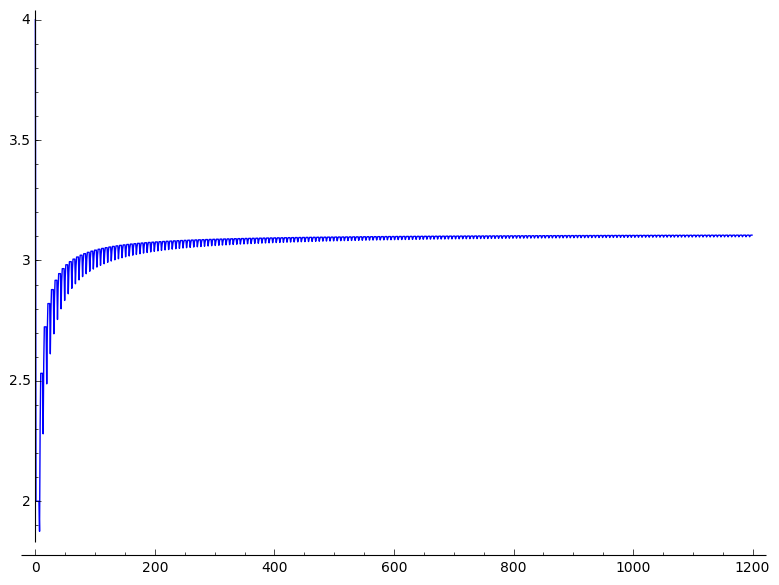}
}
\subfloat[$\frac{-2V(a(n))}{l(a(n))}$ for $\underline{a(n)}=101^20^2...1^n$]{
 \includegraphics[scale=0.3]{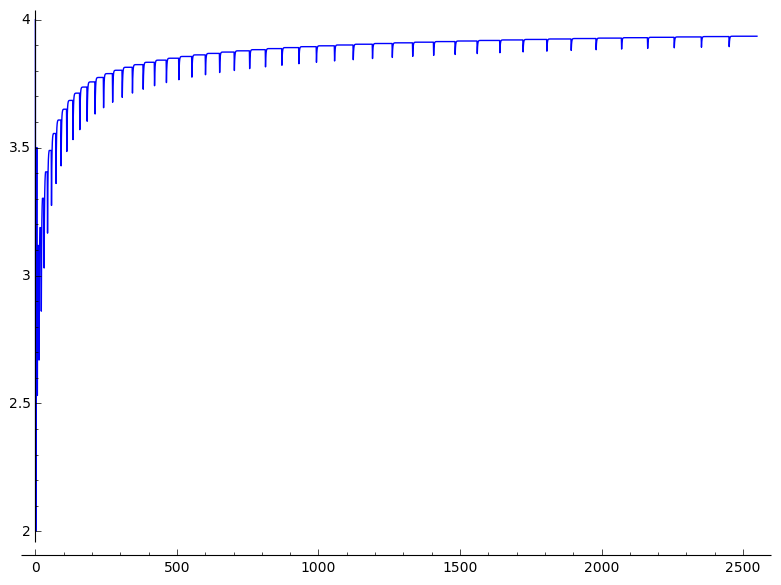}
 }
 \caption{Asymptotics of the ratio $\frac{-2V(a(n))}{l(a(n))}$ for different sequences $(a(n))_{n\in\mathbb{N}}$}
\label{f.ratio}
\end{figure}

\renewcommand{\abstractname}{Acknowledgements}
\begin{abstract}
We wish to thank the referee whose very helpful remarks played an important role in the improvement and clarification of this article. We also wish to thank Nicolas Bédaride and Pascal Hubert whose advice and careful readings of this article helped greatly.
\end{abstract}

\newpage

\end{document}